\documentclass[a4paper]{article}


\usepackage{amsmath}
\usepackage{cite}
\usepackage{tikz}
\usepackage{tikzscale}
\usetikzlibrary{matrix}
\newlength\figureheight
\newlength\figurewidth
\usepackage{pgfplots}
\usepackage{amssymb}
\usepackage{mathdots}
\usepackage{multirow}
\usepackage{algorithm}
\usepackage{algpseudocode}

\usepackage{booktabs}

\usepackage{subcaption}

\usepackage{url}

\usepackage{amsthm}
\newtheorem{theorem}{Theorem}[section]
\newtheorem{lemma}[theorem]{Lemma}
\newtheorem{corollary}[theorem]{Corollary}

\newtheorem{problem}{Problem}[section]

\newtheorem{example}{Example}[section]

\def\rddots{\cdot^{\displaystyle\cdot^{\displaystyle\cdot}}}

\title{On generating Sobolev orthogonal polynomials}

\date{}
\author{Niel Van Buggenhout\footnotemark[1]}

\begin{document}
	\maketitle
	
	\renewcommand{\thefootnote}{\fnsymbol{footnote}}
	\footnotetext[1]{Charles University, Sokolovská 83, 186 75 Praha 8, Czech Republic. (email: buggenhout@karlin.mff.cuni.cz )}
	
	\begin{abstract}
		Sobolev orthogonal polynomials are polynomials orthogonal with respect to a Sobolev inner product, an inner product in which derivatives of the polynomials appear.
		They satisfy a long recurrence relation that can be represented by a Hessenberg matrix.
		The problem of generating a finite sequence of Sobolev orthogonal polynomials can be reformulated as a matrix problem.
		That is, a Hessenberg inverse eigenvalue problem, where the Hessenberg matrix of recurrences is generated from certain known spectral information.
		Via the connection to Krylov subspaces we show that the required spectral information is the Jordan matrix containing the eigenvalues of the Hessenberg matrix and the normalized first entries of its eigenvectors.
		Using a suitable quadrature rule the Sobolev inner product is discretized and the resulting quadrature nodes form the Jordan matrix and associated quadrature weights are the first entries of the eigenvectors.
		We propose two new numerical procedures to compute Sobolev orthonormal polynomials based on solving the equivalent Hessenberg inverse eigenvalue problem.
	\end{abstract}
		\section{Introduction}
	Sobolev orthogonal polynomials are polynomials orthogonal with respect to a Sobolev inner product, i.e., an inner product in which derivatives of the polynomials appear.
	These polynomials are well studied in terms of their analytic properties, such as their asymptotic behavior and the location of their zeros, see, e.g., \cite{MF01,MaXu15,LLPCPI01}.
	The connection to Krylov subspace methods, and to numerical linear algebra in general, is largely unexplored.
	
	If the inner product involves only the polynomials themselves and not their derivatives, then there is a fruitful connection between the field of approximation theory (least squares approximation), the field of numerical linear algebra (Krylov subspaces methods) and the field of classical analysis (orthogonal polynomials).
	The problem of least squares function approximation in a given set of nodes can be best formulated in a basis that consists of specific orthogonal polynomials, i.e., polynomials orthogonal with respect to an inner product determined by the given nodes \cite{Fo57}.
	The method of choice for the numerical generation of these orthogonal polynomials, if they are not classical polynomials \cite{Sz75}, is based on reformulating the problem as a matrix problem.
	This reformulation relies on the connection between these orthogonal polynomials and Krylov subspaces generated by the diagonal matrix with the given nodes as its entries.
	The resulting problem is a Hessenberg or Jacobi inverse eigenvalue problem where the spectral information is a diagonal matrix whose entries are the given nodes \cite{Ga04book,GoMe09,GrHa84,GoWe69}.
	It can be solved by the Lanczos or Arnoldi iteration \cite{BrNaTr21,Re91,dBGo78}, by an updating procedure \cite{GrHa84,ReAmGr91} or by a method based on the quotient difference algorithm \cite{La99}.
	Furthermore, convergence analysis of Krylov subspace methods, e.g., for eigenvalue approximation or linear system solving \cite{LiSt13,Me06}, uses results from potential theory \cite{Ku06}.
	Potential theory is closely related to the study of orthogonal polynomials \cite{StTo92}.
	
	For Sobolev orthogonal polynomials there is a connection to Hermite least squares approximation, where not only the function values are known in a given set of nodes but also the derivatives of this function are known in a (sub)set of these nodes.
	This connection has been known since the initial papers on Sobolev orthogonal polynomials \cite{Al62,Gr67,IsKoNoSS89}.
	Recently a procedure for solving the Hermite least squares problem based on Krylov subspace methods was proposed \cite{NiZhZh23}. 
	We uncover the connection of Sobolev orthogonal polynomials and Hermite least squares approximation to Krylov subspaces, where the subspace is generated by a Jordan matrix.
	This new connection allows for the development of two new algebraic procedures to generate a sequence of Sobolev orthogonal polynomials. 
	It also opens the path to the application of results on Sobolev orthogonal polynomials to analyze the convergence behavior of Krylov subspace methods applied to defective matrices and to the development of spectral solvers for differential equations based on Krylov subspace methods \cite{BeMa92,YuWaLi19}.
	
	Sobolev orthonormal polynomials are introduced in Section \ref{sec:SOPs}, where also the main problem is stated, i.e., the generation of a sequence of Sobolev orthonormal polynomials.
	The connection between Krylov subspaces and Sobolev orthonormal polynomials is proved in Section \ref{sec:matrixForm}.
	This connection allows us to formulate the Hessenberg inverse eigenvalue problem that is equivalent to the main problem.
	Section \ref{sec:procedures} proposes two new methods for the solution of the inverse eigenvalue problem, one based on the Arnoldi iteration and the other based on a procedure using plane rotations that can be traced back to Rutishauser \cite{Ru63}.
	Numerical experiments are performed in Section~\ref{sec:num}, where the two new methods are compared to the state-of-the-art methods proposed by Gautschi and Zhang \cite{GaZh95}, which are generalizations of the modified Chebyshev and the discretized Stieltjes procedure \cite{Ga04book}.

	\section{Sobolev orthonormal polynomials}\label{sec:SOPs}
	Sobolev orthonormal polynomials (SOPs) form a sequence of polynomials orthonormal with respect to a Sobolev inner product.
	The Sobolev inner product is introduced in Section \ref{sec:SobInprod}.
	Using this inner product Section \ref{sec:SOP} formally defines a sequence of Sobolev orthogonal polynomials and provides some of its properties.
	Section \ref{sec:prob} formulates the main problem handled in this paper, the generation of a sequence of Sobolev orthogonal polynomials given a discretized inner product.
	Examples of Sobolev inner products and how to discretize them are given in Section \ref{sec:examples}.
	Such discretizations are needed for the formulation of the matrix problem in Section \ref{sec:matrixForm}.
	\subsection{Sobolev inner product}\label{sec:SobInprod}
	A \emph{Sobolev inner product} is defined on the space of polynomials $\mathcal{P}$ using $s+1$ finite positive Borel measures $\{\mu_r\}_{r=0}^s$ as:
	\begin{equation}\label{eq:SobInprod}
		(p,q)_S := \sum_{r=0}^{s} \int_{\Omega} p^{(r)}(z) \overline{q^{(r)}(z)} d \mu_r(z),
	\end{equation}
	where the support of $\mu_r$, denoted by $\textrm{supp}(\mu_r),$ is a compact subset of the complex plane $\mathbb{C}$.
	A Sobolev inner product of the form \eqref{eq:SobInprod} is said to be \textit{diagonal}, since, only products between polynomials of the same order of derivative appear.
	For non-diagonal Sobolev inner products we refer to the survey papers \cite{MF01,MaXu15}.
	Furthermore, we assume that the inner product is \textit{sequentially dominated} \cite{LLPC99}, that is, $\textrm{supp}(\mu_r)\subset \textrm{supp}(\mu_{r-1})$ and $d\mu_r = f_{r-1} d_{\mu_{r-1}}$, with $f_{r-1}\in L_{\infty}(\mu_{r-1})$, $r=1,\dots, s$.\
	
	A \textit{discretized inner product}\footnote[1]{The term {discrete inner product} is avoided because it is already used for a specific type of Sobolev inner product, e.g., see \cite{MaOsRo02} and Section \ref{sec:discreteSobInProd}.} will refer to an inner product that comes from the discretization of a Sobolev inner product.
	Discretization results in a finite sum over polynomials (and their derivatives) evaluated in a (sub)set of the given nodes $\{z_j\}_{j=1}^n$ weighted by real, nonnegative values $\{\vert \beta_{j,i}\vert^2\}_{i=0}^{k_j}$, where $k_j\leq s$ is the highest order derivative associated with the $j$th node.
	Hence, a discretized diagonal Sobolev inner product is an inner product of the form
	\begin{equation}\label{eq:discrSobInprod}
		\langle p,q \rangle_S  = \sum_{j=1}^{n} \begin{bmatrix}
			p(z_j) &  \dots & p^{(k_j)}(z_j) \end{bmatrix}\begin{bmatrix}
			\vert \beta_{j,0}\vert^2 \\
			& \ddots \\
			& & \vert \beta_{j,k_j}\vert^2
		\end{bmatrix}
		\begin{bmatrix}
			\overline{q(z_j)}\\
			\vdots \\
			\overline{q^{(k_j)}(z_j)}
		\end{bmatrix}.
	\end{equation}
	Such an inner product is a sequentially dominated inner product if $\beta_{j,i}\neq 0$ implies that $\beta_{j,i-1},\beta_{j,i-2},\dots,\beta_{j,0} \neq 0$, i.e., if the weight for the $i$th order derivative is nonzero then all weights corresponding to lower order derivatives and to the same node are also nonzero.
	In the remainder of this text we assume that $\beta_{j,k_j}\neq 0$ for $j=1,2,\dots,n$ and that $\langle .,. \rangle_S$ is sequentially dominated.\\
	An early reference for discretized inner products related to measures in the complex plane is \cite{CaMa90}, where the connection between the Gram matrix for Sobolev inner products and Sobolev orthogonal polynomials is discussed, a connection that is not discussed in this paper.
	
	\subsection{Sobolev orthonormal polynomials}\label{sec:SOP}
	A sequence of polynomials $\{p_k\}_{k\geq 0}$ is called a sequence of \textit{Sobolev orthonormal polynomials} (SOPs) with respect to $(.,.)_S$ if
	\begin{equation*}
		p_k \in\mathcal{P}_{k}\backslash \mathcal{P}_{k-1} \quad\text{ and }\quad
		(p_k, p_\ell)_S = \begin{cases}
			0,\quad \text{if } k\neq \ell\\
			1,\quad \text{if } k= \ell
		\end{cases}.
	\end{equation*}
	Note that this definition also includes the discretized Sobolev inner products, we can choose $(.,.)_S = \langle .,.\rangle_S$ of the form \eqref{eq:discrSobInprod}.
	A sequence $\{p_k\}_{k\geq 0}$ of SOPs satisfies the recurrence relation 
	\begin{equation}\label{eq:RR_Hess}
		z\begin{bmatrix}
			p_0(z) & \dots & p_{k-1}(z)
		\end{bmatrix} = \begin{bmatrix}
			p_0(z) &  \dots & p_{k-1}(z)
		\end{bmatrix} H_{k} + h_{k+1,k} p_{k}(z) e_k^\top,
	\end{equation}
	where $h_{k+1,k} >0$ and $H_{k}\in\mathbb{C}^{k\times k}$ is a Hessenberg matrix.
	A Hessenberg matrix $H$ is a matrix with only zero entries below its first subdiagonal, i.e., $h_{i,j} = 0$ for $i-1>j$.
	Each root of the Sobolev orthonormal polynomial $p_{k}(z)$ is an eigenvalue of the Hessenberg matrix $H_{k}\in \mathbb{C}^{k\times k}$ \cite{LLPC99}, the converse also holds since the sequence of SOPs forms a triangle family of polynomials \cite{SaSm88}.
	
	For the discretized inner product $\langle .,.\rangle_S$ and $m=\sum_{j=1}^n(k_j+1)$ we get $h_{m+1,m} = 0$ in the recurrence relation \eqref{eq:RR_Hess} and, therefore, it breaks down:
	\begin{equation*}
		z \begin{bmatrix}
			p_0(z) & \dots & p_{m-1}(z)
		\end{bmatrix} = \begin{bmatrix}
			p_0(z) & \dots & p_{m-1}(z)
		\end{bmatrix} H_m.
	\end{equation*} 
	Moreover, the polynomial $p_m(z)$ is known up to normalization, i.e., $p_m(z) = \prod_{j=0}^{n}(z-z_j)^{k_j+1}$. This polynomial cannot be normalized with $\langle .,. \rangle_S$ since it vanishes for this inner product, i.e., $\langle p_m,f\rangle_S=0$ for any polynomial $f\in\mathcal{P}$.
	
	\subsection{The problem of generating SOPs}\label{sec:prob}
	The problem considered in this paper is the following.
	\begin{problem}[Generate SOPs]\label{prob:generateSOPs}
		For a given sequentially dominated diagonal Sobolev inner product $\left(.,.\right)_S$, compute the sequence of Sobolev orthonormal polynomials $\{p_k\}_{k=0}^{m-1}$, i.e.,
		\begin{equation*}
			p_k \in\mathcal{P}_{k}\backslash \mathcal{P}_{k-1} \quad\text{ and }\quad
			(p_k, p_\ell)_S = \begin{cases}
				0,\quad \text{if } k\neq \ell\\
				1,\quad \text{if } k= \ell
			\end{cases}.
		\end{equation*}
	\end{problem}
	The procedures proposed in this paper rely on first discretizing the inner product $\left(.,.\right)_S$, which results in an inner product $\langle.,.\rangle_S$ of the form \eqref{eq:discrSobInprod} and, second,
applying techniques from numerical linear algebra to generate a finite sequence of Sobolev orthonormal polynomials for $\langle.,.\rangle_S$.
	For an adequate choice of discretization, these polynomials will be orthonormal with respect to both the discretized inner product, $\langle .,.\rangle_{S}$, and the continuous one, $\left(.,.\right)_S$.
	For example, if a Gaussian quadrature rule with degree of exactness $2m-1$ is used to obtain $\langle.,.\rangle_{S}$, then it is expected that we can generate the first $m$ SOPs $\{p_0,p_1,\dots,p_{m-1}\}$ for $\left(.,.\right)_S$ accurately.
	We assume that an appropriate discretization $\langle .,.\rangle_{S}$ of $(.,.)_S$ is available, then the problem can be formulated in terms of this discretized inner product.
	\begin{problem}[Generate SOPs]\label{prob:generateDiscreteSOPs}
		Given a sequentially dominated diagonal discretized Sobolev inner product $\langle.,.\rangle_S$, as in \eqref{eq:discrSobInprod}, with $n$ nodes $z_j$ of multiplicity $(k_j+1)$ and corresponding weights $\{\beta_{j,i}\}_{i=0}^{k_j}$.
		Let $m = \sum_{j=1}^n (k_j+1)$.
		Compute a sequence of Sobolev orthonormal polynomials $\{p_k\}_{k=0}^{m-1}$ for $\langle.,.\rangle_S$, i.e.,
$p_k\in\mathcal{P}_k\backslash\mathcal{P}_{k-1}$ and
		\begin{equation*}
			\langle p_k,p_\ell\rangle_S =\begin{cases}
				0,\quad \text{if } k\neq \ell,\\
				1,\quad \text{if } k= \ell.
			\end{cases}
		\end{equation*}
	\end{problem}
	This formulation can be immediately reformulated as a matrix problem, see Section~\ref{sec:IEP}.
	
	\subsection{Examples}\label{sec:examples}
	The discretization of Sobolev inner products using Gauss-type quadrature rules is illustrated, see also \cite[Chapter 2.2]{Ga04book}.
	An $n$-point Gaussian quadrature rule for the measure $d\mu$ on $\Omega$ consists of a set of $n$ nodes $\{z_j\}_{j=1}^n$ and weights $\{\beta_j\}_{j=1}^n$ such that
	\begin{equation*}
		\int_{\Omega} f(z)d \mu(z) = \sum_{j=1}^{n} \vert\beta_j\vert^2 f(z_j) + R_n(f),
	\end{equation*}
	where $R_n(f) = 0$ for any $f\in\mathcal{P}_{2n-1}$.
	That is, the quadrature rule is exact for polynomials up to degree $2n-1$.
	
	\subsubsection{Same measures $\mu_r= \gamma_r \mu$}
	Consider a Sobolev inner product where all measures are the same up to a multiplicative constant, $\mu_r = \gamma_r\mu$ for $r=0,1,\dots,s$ and $\gamma_r>0$, 
	\begin{equation*}
		(p,q)_S = \sum_{r=0}^{s} \gamma_r \int_{\Omega} p^{(r)}(z) \overline{q^{(r)}(z)} d \mu(z).
	\end{equation*}
	These Sobolev inner products occur in the earliest study of Sobolev orthogonal polynomials \cite{Al62,Gr67,MaPePi96}.
	Using an $n$-point Gauss quadrature rule with nodes $\{z_j\}_{j=1}^n$ and weights $\{\vert\beta_j\vert^2\}_{j=1}^n$ for the integral, results in the discretized inner product
	\begin{equation*}
		\langle p,q\rangle_S =  \sum_{j=1}^{n} \vert \beta_j \vert^2 \left(\sum_{r=0}^{k_{j}} \gamma_r p^{(r)}(z_j) \overline{q^{(r)}(z_j)}\right).
	\end{equation*}
	This discretization is exact up to degree $2m-1$, for $m=\sum_{j=1}^n (k_j+1)$, in the sense that $(p,q)_S = \langle p,q\rangle_S$ for all $p\in\mathcal{P}_{m-1}$ and $q\in\mathcal{P}_m$.
	Thus, it suffices to generate the $m$ first SOPs $\{p_k\}_{k=0}^{m-1}$ for $(.,.)_S$.
	Althammer polynomials \cite{Al62} are orthogonal with respect to a Sobolev inner product of this form, with $\mu$ the Legendre measure and $s=1$:
	\begin{equation}\label{eq:AltPoly}
		\langle p,q\rangle_{S} = \int_{-1}^{1} p(x) q(x) dx + \gamma \int_{-1}^{1} p'(x) q'(x) dx, \quad \gamma >0.
	\end{equation}
	Discretization with an $n$-point Gauss-Legendre quadrature rule, with nodes $\{x_j^\textrm{Le}\}_{j=1}^n$ and weights $\{\vert\beta_j^\textrm{Le}\vert^2 \}_{j=1}^n$, results in the discretized inner product
	\begin{equation*}
		\langle p,q\rangle_{S} = \sum_{j=1}^n \vert \beta_j^\textrm{Le}\vert^2 p(x_j^\textrm{Le}) q(x_j^\textrm{Le}) + \gamma \sum_{j=1}^n \vert \beta_j^\textrm{Le}\vert^2 p'(x_j^\textrm{Le}) q'(x_j^\textrm{Le}).
	\end{equation*}
	Marcell\'an and co-authors \cite{MaPePi96} studied Laguerre-Sobolev polynomials, which are orthogonal with respect to the inner product
	\begin{equation}\label{eq:LaguerreInprod}
		(p,q)_S = \int_{0}^{+\infty} p(x) q(x) x^\alpha \exp(-x) dx  + \gamma \int_{0}^{+\infty} p'(x) q'(x) x^\alpha \exp(-x) dx.
	\end{equation}
	An $n$-point Gauss-Laguerre quadrature rule, with nodes $\{x_j^\textrm{La}\}_{j=1}^n$ and weights by $\{\vert\beta_j^\textrm{La}\vert^2 \}_{j=1}^n$, results in a suitable discretized inner product
	\begin{equation}
\label{eq:LaguerreInprod_disc}
		\langle p,q\rangle_{S} = \sum_{j=1}^n \vert \beta_j^\textrm{La}\vert^2 p(x_j^\textrm{La}) q(x_j^\textrm{La}) + \gamma \sum_{j=1}^n \vert \beta_j^\textrm{La}\vert^2 p'(x_j^\textrm{La}) q'(x_j^\textrm{La}).
	\end{equation}
	
	\subsubsection{Discrete Sobolev inner products}\label{sec:discreteSobInProd}
	Discrete Sobolev inner products are composed of an integral on the functions themselves and a weighted sum over a finite set of nodes $\{c_j\}_{j=1}^J$ of the derivatives of the functions up to the order $\ell_j$ for the node $c_j$, see, e.g., \cite{MaOsRo02},
	\begin{equation*}
		(p,q)_S = \int_{\Omega}p(z) \overline{q(z)} d\mu(z) + \sum_{j=1}^{J} \sum_{r=0}^{\ell_j} \gamma_k^{(r)} p^{(r)}(q_j) q^{(r)}(c_j).
	\end{equation*}
	If the nodes $c_j\notin \Omega$, then we can use the $n$-point Gauss quadrature rule $\sum_{j=1}^{n} \vert \beta_j \vert^2 p(z_j) q(z_j) \approx \int_{\Omega}p(z) \overline{q(z)} d\mu(z)$ to obtain a suitable discretized inner product for $(.,.)_S$,
	\begin{equation}\label{eq:discreteSobInprod_disc}
		\langle p,q\rangle_S = \sum_{j=1}^{n} \vert \beta_j \vert^2 p(z_j) q(z_j) + \sum_{j=1}^{J} \sum_{r=0}^{\ell_j} \gamma_k^{(r)} p^{(r)}(c_j) q^{(r)}(c_j).
	\end{equation}
	The discrete Laguerre-Sobolev inner product is of the above form, that is,
	\begin{equation}\label{eq:LagSob}
		(p,q)_S = \int_{0}^{+\infty} p(x) q(x) x^\alpha \exp(-x) dx  + M p(c)q(c) + N p'(c)q'(c),
	\end{equation}
	with $\alpha >-1$, real numbers $M,N>0$ and $c=-1$.
	There exists a higher order five-term recurrence relation for the associated SOPs \cite{HeHuLaMa22}. 
	Next, we consider an example where the nodes are endpoints of the interval $\Omega = \left[a,b\right]\subset\mathbb{R}$,
	\begin{equation*}
		(p,q)_S = \int_{\Omega}p(x) q(x) d\mu(x) + \sum_{r=0}^{\ell_1} \gamma_1^{(r)} p^{(r)}(a) q^{(r)}(a) + \sum_{r=0}^{\ell_2} \gamma_2^{(r)} p^{(r)}(b) q^{(r)}(b).
	\end{equation*}
	In order to obtain a discretized inner product that is sequentially dominated, a Gauss-Radau or Gauss-Lobatto rule must be used.
	Consider, for example,
	\begin{equation*}
		(p,q)_S = \int_{-1}^{1} p(x)q(x) d\mu(x) + \gamma p'(1)q'(1).
	\end{equation*}

	For a sequentially dominated discretized inner product, the quadrature rule for the integral must have a term $p(1)q(1)$.
	Therefore, a Gauss-Radau rule with a node fixed at $x=1$ is used.
	The resulting nodes $\{x_j^R\}_{j=0}^n$, with $x_0^R = 1$, and weights $\{\vert \beta_j\vert^2\}_{j=0}^n$ lead to the discretized inner product
	\begin{align}
		\int_{1}^{1} p(x)q(x) d\mu(x) + \gamma p'(1)q'(1) \approx & \sum_{j=1}^{n} \vert \beta_j^R\vert^2 p(x_j^R) q(x_j^R) \nonumber\\ 
		&+ \vert \beta_0^R \vert^2 p(1) q(1) + \gamma p'(1)q'(1).
\label{eq:GRplusa}
	\end{align}

	\section{Matrix formulation}\label{sec:matrixForm}
	The problem of generating Sobolev orthonormal polynomials, Problem \ref{prob:generateDiscreteSOPs}, is reformulated as a matrix problem.
	This is a Hessenberg inverse eigenvalue problem, where a Hessenberg matrix must be constructed from spectral information.
	By using the connection between Sobolev orthogonal polynomials and Krylov subspaces we show that the required spectral information is the Jordan matrix containing the eigenvalues of the Hessenberg matrix and the normalized first entries of its eigenvectors.
	Krylov subspaces are introduced in Section~\ref{sec:KrylovSubspaces}.
	In Section \ref{sec:inprods} we prove that there is an equivalence between certain Sobolev inner products on the space of polynomials and the Euclidean inner product on Krylov subspaces.
	This equivalence allows us to formulate the Hessenberg inverse eigenvalue problem in Section \ref{sec:IEP}.
	The Jordan matrix and first entries of the eigenvectors are obtained from the discretized inner product, Section~\ref{sec:examples_MatrixForm} illustrates for some examples how to obtain these.
	
	\subsection{Krylov subspaces}\label{sec:KrylovSubspaces}
	A \emph{Krylov subspace} of dimension $k\leq m$ is defined for a matrix $A\in\mathbb{C}^{m\times m}$ and starting vector $v\in\mathbb{C}^m$ as 
	\begin{equation*}
		\mathcal{K}_k(A,v) = \textrm{span}\{v,Av,A^2 v,\dots,A^{k-1}v \}.
	\end{equation*}
	Krylov subspaces are nested, i.e., $\mathcal{K}_{k-1}(A,v)\subseteq \mathcal{K}_k(A,v)$.
	A basis $\begin{bmatrix}
		q_1 & \cdots & q_k
	\end{bmatrix}=Q_k\in\mathbb{C}^{m\times k}$ for $\mathcal{K}_k(A,v)$ is a \textit{nested orthonormal basis} if it satisfies  
	\begin{equation*}
		Q_k^H Q_k = I \quad \text{ and }\quad \textrm{span}\{q_1,\dots, q_i\} = \mathcal{K}_i(A,v), \quad i=1,2,\dots, k.
	\end{equation*}
	A nested orthonormal basis $Q_k$ for $\mathcal{K}_k(A,v)$ satisfies a recurrence relation of the form
	\begin{equation}\label{eq:RR_Krylov}
		A Q_k = Q_k H_k + h_{k+1,k} q_{k+1}e_k^\top,
	\end{equation}
	where $h_{k+1,k} >0$, $H_{k}\in\mathbb{C}^{k\times k}$ is a Hessenberg matrix and $q_{k+1}\in\mathbb{C}^m$ satisfies the orthogonality condition $q_{k+1}^H Q_k = \begin{bmatrix}
		0 & \cdots & 0
	\end{bmatrix}$.
	
	When $\mathcal{K}_{k}(A,v)$ is an invariant subspace for $A$, i.e., $A\mathcal{K}_{k}(A,v)\subseteq \mathcal{K}_{k}(A,v)$, then $h_{k+1,k} = 0$ and the recurrence relation breaks down.
	For the matrix-vector pairs discussed in this paper, i.e., those related to a discretized inner product $\langle .,. \rangle_{S}$, a breakdown cannot occur for $k<m$ and thus, it occurs for $k=m$.
	An important observation is that any vector $y\in\mathcal{K}_k(A,v)$ can be written as
	\begin{equation*}
		y = \pi_{k-1}(A)v, \quad \text{where }\pi_{k-1}\in\mathcal{P}_{k-1}.
	\end{equation*}
	That is, any vector in a Krylov subspace can be written as a polynomial evaluated in the matrix $A$ and multiplied by the starting vector $v$.
	
	\subsection{Equivalence between inner products}\label{sec:inprods}
	For a given sequentially dominated discretized diagonal Sobolev inner product~\eqref{eq:discrSobInprod}, it is possible to choose a matrix $A=Z$ and starting vector $v=w$ such that it is equivalent to the Euclidean inner product on $\mathcal{K}_m(Z,w)$.
	The matrix $Z$ is a Jordan matrix, i.e., a block diagonal matrix,
	\begin{equation*}
		Z = \begin{bmatrix}
			J_{1,k_1} \\
			& J_{2,k_2}\\
			& & \ddots \\
			& & & J_{n,k_n}
		\end{bmatrix}= J_{1,k_1}\oplus J_{2,k_2}\oplus \dots \oplus J_{n,k_n},
	\end{equation*}
	composed of Jordan blocks $J_{j,k_j}$ with distinct eigenvalues, $z_i\neq z_j$ if $i\neq j$,
	\begin{equation}\label{eq:JordanBlock}
		J_{j,k_j} = J_{j,k_j}(\alpha_1^{(j)},\dots,\alpha_{k_j}^{(j)}) = \begin{bmatrix}
			z_j & \alpha_{k_j}^{(j)}\\
			& z_j & \ddots\\
			& & \ddots & \alpha_1^{(j)}\\
			& & & z_j
		\end{bmatrix}\in\mathbb{C}^{(k_j+1)\times (k_j+1)}.
	\end{equation}
	The starting vector, which we will also call weight vector, $w$ has the form
	\begin{equation}\label{eq:weightBlock}
		w = \begin{bmatrix}
			\beta_1 e_{k_1+1}\\
			\beta_2 e_{k_2+1}\\
			\vdots \\
			\beta_n e_{k_n+1}
		\end{bmatrix}, \quad \text{with } e_{{k_j+1}} := \begin{bmatrix}
			0\\
			\vdots\\
			0\\
			1
		\end{bmatrix}\in\mathbb{R}^{(k_j+1)}.
	\end{equation}
	In Theorem \ref{theorem:KrylovInprod} we state this equivalence formally, first, we state a property of matrix functions of a Jordan block and associated weight vector.
	\begin{lemma}\label{lemma:colfJk}
		The last column of $f(J_{j,k_j})$, with $J_{j,k_j} := J_{j,k_j}(\alpha_1^{(j)},\dots,\alpha_{k_j}^{(j)})$ as in Equation \ref{eq:JordanBlock}, is
		\begin{equation*}
			f(J_{j,k_j}) e_{k_j} = \begin{bmatrix}
				\frac{\prod_{i=1}^{k_j}\alpha_i^{(j)}}{k_j!} f^{(k_j)}(z_j)\\
				\frac{\prod_{i=1}^{k_j-1}\alpha_i^{(j)}}{(k_j-1)!} f^{(k_j-1)}(z_j)\\
				\vdots\\
				\frac{\alpha_1^{(j)} \alpha_2^{(j)}}{2!} f''(z_j)\\
				\alpha_1^{(j)} f'(z_j)\\
				f(z_j)
			\end{bmatrix}.
		\end{equation*}
	\end{lemma}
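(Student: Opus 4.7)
The plan is to split $J_{j,k_j}$ into its diagonal part and its strictly upper part, invoke the Taylor expansion of the matrix function $f$ at the single eigenvalue $z_j$, and then track how the nilpotent part shifts the last standard basis vector.

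First, I would write $J_{j,k_j} = z_j I + N$, where the nilpotent $N := J_{j,k_j} - z_j I$ carries the entries $\alpha_{k_j}^{(j)},\alpha_{k_j-1}^{(j)},\ldots,\alpha_1^{(j)}$ on its first superdiagonal and zeros elsewhere. Since $N^{k_j+1}=0$ and the spectrum of $J_{j,k_j}$ consists of the single point $\{z_j\}$, the standard definition of a matrix function on a Jordan block specialises to the finite Taylor series
\begin{equation*}
f(J_{j,k_j}) \;=\; \sum_{r=0}^{k_j} \frac{f^{(r)}(z_j)}{r!}\, N^r.
\end{equation*}
Consequently it suffices to compute $N^r$ applied to the last standard basis vector and then read off the coefficients row by row.

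A one-line induction on $r$ shows that $N$ moves the last unit vector one slot upward while multiplying it by $\alpha_1^{(j)}$, i.e.\ $N e_{k_j+1} = \alpha_1^{(j)} e_{k_j}$, and more generally
\begin{equation*}
N^r e_{k_j+1} \;=\; \Bigl(\prod_{\ell=1}^{r} \alpha_\ell^{(j)}\Bigr)\, e_{k_j+1-r}, \qquad r=0,1,\ldots,k_j.
\end{equation*}
Substituting this into the Taylor sum and reading off the entry in each row reproduces exactly the vector displayed in the lemma. As an alternative derivation, one could instead diagonally conjugate $J_{j,k_j}$ to the classical Jordan block with $1$'s on the superdiagonal via $D = \operatorname{diag}(d_1,\ldots,d_{k_j+1})$ with $d_{k_j+1}=1$ and $d_{k_j+1-r}=\prod_{\ell=1}^r \alpha_\ell^{(j)}$, apply the textbook formula for $f$ of a unit Jordan block, and then undo the conjugation; this gives the same last column because $D^{-1} e_{k_j+1} = e_{k_j+1}$.

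The only subtle point, and the sole place where a mistake would be easy to make, is the reverse indexing used in the definition of $J_{j,k_j}$: the superdiagonal reads $\alpha_{k_j}^{(j)}$ at the top and $\alpha_1^{(j)}$ at the bottom, so when $N$ is applied repeatedly starting from the last basis vector the factors are collected in the order $\alpha_1^{(j)},\alpha_2^{(j)},\ldots$, which is precisely why the product $\prod_{\ell=1}^{r}\alpha_\ell^{(j)}$ (rather than a reversed one) appears in the $r$-th term. Beyond this bookkeeping and the harmless notational mismatch between $e_{k_j}$ in the statement and the last basis vector $e_{k_j+1}$ of $\mathbb{C}^{k_j+1}$ that is actually being computed, no real obstacle arises.
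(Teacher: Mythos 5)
Your proof is correct and follows essentially the same route as the paper, whose proof is the one-line instruction to plug $J_{j,k_j}$ into the Taylor series expansion of $f$; you simply carry out that expansion explicitly, including the correct bookkeeping $N^r e_{k_j+1} = \bigl(\prod_{\ell=1}^{r}\alpha_\ell^{(j)}\bigr)e_{k_j+1-r}$ and the observation about the reversed superdiagonal indexing. No gaps; your write-up is just a more detailed version of the paper's argument.
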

	\begin{proof}
		Plug $J_{j,k_j}$ into the Taylor series expansion of $f(J_{j,k_j})$.
	\end{proof}
	
	\begin{theorem}\label{theorem:KrylovInprod}
		Consider $Z\in \mathbb{C}^{m\times m}$ and $w\in\mathbb{C}^{m}$, as in Equations \ref{eq:JordanBlock} and \ref{eq:weightBlock}, respectively.
		And consider two vectors $x\in\mathcal{K}_k(Z,w)$ and $y\in\mathcal{K}_{\ell}(Z,w)$, with $k,\ell\leq m$, then
		\begin{equation}\label{eq:KrylInprod}
			y^H x = \langle p,q\rangle_S =\sum_{j = 1}^{n} \vert \beta_j\vert^2 \left(\sum_{r=0}^{k_j} \left\vert \frac{\prod_{i=1}^{r} \alpha_i^{(j)}}{r!}\right\vert^2 \overline{q^{(r)}(z_j)} p^{(r)}(z_j)\right)
		\end{equation}
		for $p\in\mathcal{P}_k$ and $q\in\mathcal{P}_\ell$ satisfying $p(Z)w=x$ and $q(Z)w=y$.
	\end{theorem}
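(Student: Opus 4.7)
The plan is to exploit the block-diagonal structure of $Z$: reduce the Euclidean inner product on $\mathcal{K}_m(Z,w)$ to a sum of inner products of blocks of length $k_j+1$, and then evaluate each block contribution using Lemma~\ref{lemma:colfJk}.

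As a first step, I would recall from Section~\ref{sec:KrylovSubspaces} that every vector in a Krylov subspace is of the form $\pi(Z)w$ for a polynomial $\pi$; this is exactly the working hypothesis, so $x=p(Z)w$ and $y=q(Z)w$. Since $Z = J_{1,k_1}\oplus\cdots\oplus J_{n,k_n}$, the matrix polynomials split accordingly, $p(Z) = p(J_{1,k_1})\oplus\cdots\oplus p(J_{n,k_n})$, and likewise for $q(Z)$. Partitioning $w$ conformally into blocks $\beta_j e_{k_j+1}$ and similarly partitioning $x$ and $y$ into blocks $x_j, y_j \in \mathbb{C}^{k_j+1}$, I obtain
\begin{equation*}
 x_j = \beta_j\, p(J_{j,k_j})\, e_{k_j+1}, \qquad y_j = \beta_j\, q(J_{j,k_j})\, e_{k_j+1},
\end{equation*}
so that $x_j$ is $\beta_j$ times the last column of $p(J_{j,k_j})$, and analogously for $y_j$.

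The heart of the argument is then a direct application of Lemma~\ref{lemma:colfJk} with $f=p$ and with $f=q$, which gives explicit expressions for $x_j$ and $y_j$ whose entries are, up to the factor $\beta_j$, the scaled derivatives $(\prod_{i=1}^{r}\alpha_i^{(j)}/r!)\, p^{(r)}(z_j)$ and $(\prod_{i=1}^{r}\alpha_i^{(j)}/r!)\, q^{(r)}(z_j)$ for $r=0,\dots,k_j$. Summing contributions block by block gives $y^H x = \sum_{j=1}^{n} y_j^H x_j$, and substituting the explicit entries of $x_j$ and $y_j$ collapses the right-hand side to the double sum in \eqref{eq:KrylInprod}: the scaling factor appears once from $x_j$ and once (conjugated) from $y_j$, producing the modulus squared that appears in the statement, while $|\beta_j|^2$ comes from the two weight factors multiplying each block.

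The argument is bookkeeping rather than a conceptual obstacle; the only delicate points are keeping the indexing of entries within each Jordan block consistent with the convention of Lemma~\ref{lemma:colfJk} and placing the complex conjugation from the Euclidean inner product $y^H x$ on the factors coming from $y$, so that $q^{(r)}(z_j)$ is conjugated and the scalar coefficient becomes a modulus squared.
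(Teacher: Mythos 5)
Your proposal is correct and follows essentially the same route as the paper: write $x=p(Z)w$, $y=q(Z)w$, exploit the block-diagonal structure of $Z$ so that $y^Hx$ splits into a sum of block contributions $y_j^Hx_j$, and evaluate each block via Lemma~\ref{lemma:colfJk} applied to $p$ and $q$, with $\vert\beta_j\vert^2$ and the squared moduli of the scaling factors emerging exactly as you describe. No gaps.
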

	\begin{proof}
		A vector in a Krylov subspace $x\in \mathcal{K}_k(Z,w)$ of dimension $k$ can be written as $x = p(Z)w$ for some polynomial $p\in\mathcal{P}_k$.
		Similarly for $y\in \mathcal{K}_{\ell}(Z,w)$ we have $y=q(Z)w$ for $q\in\mathcal{P}_{\ell}$.
		Therefore, by the definition of a matrix function \cite[Chapter 1]{Hi08}, the Euclidean inner product can be written as
		\begin{align*}
			y^H x &= w^H \left(q(Z)\right)^H p(Z)w \\
			&= \left(
			q\left(J_{1,k_1}\oplus \dots \oplus J_{n,k_n}\right)
			\begin{bmatrix}
				\beta_1 e_{k_1+1}\\
				\vdots\\
				\beta_n e_{k_n+1}
			\end{bmatrix}\right)^H
			p\left(J_{1,k_1}\oplus \dots \oplus J_{n,k_n}\right) \begin{bmatrix}
				\beta_1 e_{k_1+1}\\
				\vdots\\
				\beta_n e_{k_n+1}
			\end{bmatrix}.
		\end{align*}
		Consider a single block, say block $j$, and apply Lemma \ref{lemma:colfJk}:
		\begin{align*}
			\left( q(J_{j,k_j}) \beta_j e_{k_j+1} \right)^H p(J_{j,k_j}) \beta_j e_{k_j+1} &= \vert \beta_j\vert^2 \left( q(J_{j,k_j}) e_{k_j+1} \right)^H p(J_{j,k_j}) e_{k_j+1}\\
			&= \vert \beta_j\vert^2 \sum_{r=0}^{k_j} \left\vert 	\frac{\prod_{i=1}^{r} \alpha_i^{(j)}}{r!} \right\vert^2 \overline{q^{(r)}(z_j)} p^{(r)}(z_j).
		\end{align*}
		Summing over all blocks $j=1,2,\dots,n$, proves the statement.
	\end{proof}
	Theorem \ref{theorem:KrylovInprod} implies that if vectors $x,y\in \mathcal{K}_m(Z,w)$ are orthogonal for the Euclidean inner product, i.e., $y^Hx = 0$, then the polynomials $p,q\in\mathcal{P}$ satisfying $p(Z)w=x$ and $q(Z)w=y$ are orthogonal with respect to the Sobolev inner product in Equation \ref{eq:KrylInprod}.
	
	\subsection{Hessenberg inverse eigenvalue problem}\label{sec:IEP}
	From Theorem \ref{theorem:KrylovInprod} and the Gram-Schmidt orthogonalization process it follows that the Hessenberg matrix appearing in the recurrence relations \eqref{eq:RR_Hess} and \eqref{eq:RR_Krylov} is the same matrix.
	\begin{corollary}\label{cor:SOP_Qk_equivalence}
		Consider a nested orthonormal basis $Q_k\in\mathbb{C}^{m\times k}$ for the Krylov subspace $\mathcal{K}_{k}(Z,w)$, with $k> m$ and $Z\in\mathbb{C}^{m\times m}$, $w\in\mathbb{C}^m$ as in \eqref{eq:JordanBlock} and \eqref{eq:weightBlock}.
		Let $H_k\in\mathbb{C}^{k\times k}$ denote the Hessenberg matrix and $h_{k+1,k}> 0$ such that
		\begin{equation*}
			Z Q_k = Q_k H_k + h_{k+1,k}q_{k+1}e_k^\top.
		\end{equation*}
		Then, the recurrence relation for the sequence of SOPs $\{p_\ell\}_{\ell=0}^{k}$, orthonormal with respect to $\langle .,.\rangle_{S}$ \eqref{eq:KrylInprod}, is given by
		\begin{equation*}
			z \begin{bmatrix}
				p_0(z) & \dots & p_{k-1}(z) 
			\end{bmatrix} = \begin{bmatrix}
				p_0(z) & \dots & p_{k-1}(z) 
			\end{bmatrix} H_k + h_{k+1,k} p_k(z).
		\end{equation*}
		The converse also holds.
	\end{corollary}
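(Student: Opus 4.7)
The plan is to use Theorem~\ref{theorem:KrylovInprod} as a dictionary between vectors in $\mathcal{K}_k(Z,w)$ and polynomials in $\mathcal{P}_{k-1}$, and then translate the Krylov recurrence column by column into the polynomial recurrence~\eqref{eq:RR_Hess}.

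First, I would define the polynomials $p_\ell$ implicitly by $q_{\ell+1}=p_\ell(Z)w$ for $\ell=0,1,\dots,k$. To see that such a $p_\ell$ exists, is unique, and has degree exactly $\ell$, I would invoke the no-breakdown assumption: the vectors $w,Zw,\dots,Z^{m-1}w$ are linearly independent, so for $\ell<m$ the evaluation map $\mathcal{P}_\ell\to\mathcal{K}_{\ell+1}(Z,w),\ p\mapsto p(Z)w$ is a linear bijection. Combined with the nestedness condition $q_{\ell+1}\in\mathcal{K}_{\ell+1}(Z,w)\setminus\mathcal{K}_{\ell}(Z,w)$, this forces $p_\ell\in\mathcal{P}_\ell\setminus\mathcal{P}_{\ell-1}$.

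Second, I would show that the $p_\ell$ form a sequence of SOPs. This is immediate from Theorem~\ref{theorem:KrylovInprod}: the relation $q_j^H q_i=\delta_{ij}$ translates, via the theorem's identity $y^H x=\langle p,q\rangle_S$, into $\langle p_{i-1},p_{j-1}\rangle_S=\delta_{ij}$, so the sequence is orthonormal with respect to~\eqref{eq:KrylInprod}. Third, I would translate the Krylov recurrence column by column. The $j$th column of $ZQ_k=Q_k H_k+h_{k+1,k}q_{k+1}e_k^\top$ reads $Zq_j=\sum_{i}h_{ij}q_i+\delta_{jk}h_{k+1,k}q_{k+1}$, and since $Z$ commutes with $p_{j-1}(Z)$, the left-hand side equals $(z\,p_{j-1})(Z)w$. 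Applying the evaluation bijection and assembling the $k$ resulting polynomial identities into row-vector form yields exactly the stated recurrence. The converse direction is symmetric: starting from the polynomial recurrence, set $q_{\ell+1}:=p_\ell(Z)w$, use Theorem~\ref{theorem:KrylovInprod} in reverse to obtain Euclidean orthonormality of the $q_\ell$, and then read the Krylov recurrence off column by column.

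The main obstacle is ensuring that the polynomial-to-vector correspondence is a genuine bijection in the presence of the defective Jordan structure of $Z$. This is precisely where the no-breakdown-below-$m$ assumption is essential; without it, one could have distinct polynomials $p\neq q$ of low degree with $p(Z)w=q(Z)w$, and the argument would collapse. Apart from this subtlety, the proof is essentially bookkeeping, since the inner-product equivalence of Theorem~\ref{theorem:KrylovInprod} already performs the analytical work of handling the derivatives that appear in the Sobolev inner product.
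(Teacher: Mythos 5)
Your proposal is correct and follows essentially the same route the paper takes: the paper justifies this corollary in one line by appealing to Theorem~\ref{theorem:KrylovInprod} together with the Gram--Schmidt process, i.e., exactly the polynomial-to-vector dictionary $q_{\ell+1}=p_\ell(Z)w$ that you make explicit, with the no-breakdown-before-$m$ property guaranteeing the correspondence is a bijection (note the corollary's hypothesis ``$k>m$'' is a typo for $k<m$, which your argument implicitly uses). Your column-by-column translation of $ZQ_k=Q_kH_k+h_{k+1,k}q_{k+1}e_k^\top$ is just the detailed bookkeeping the paper leaves to the reader, so there is nothing genuinely different to compare.
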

	For $k=m$, the recurrence relation \eqref{eq:RR_Krylov} breaks down, i.e.,
	\begin{equation*}
		Z Q_m = Q_m H_m,
	\end{equation*}
	with $Q_m e_1 = w/\Vert w\Vert_2$ and $Q_m^H Q_m = I$.
	Thanks to the unitarity of $Q_m$ we have
	\begin{equation*}
		Q_m^H Z Q_m = H_m,
	\end{equation*}
	which is the Jordan canonical form of the Hessenberg matrix $H_m$.
	Conversely, if the first entries of the eigenvectors $Q_m e_1 = w/\Vert w\Vert_2$ and the Jordan matrix $Z$ are known, then the unitary matrix $Q_m$ and Hessenberg matrix $H_m$ can be reconstructed.
	This is called a structured inverse eigenvalue problem \cite{ChGo02} and, since $H_m$ must be of Hessenberg form, it is called a Hessenberg inverse eigenvalue problem.
	\begin{problem}[Hessenberg inverse eigenvalue problem (HIEP)]\label{prob:IEP}
		Given a matrix $Z = J_{1,k_1} \oplus J_{2,k_2} \oplus \dots \oplus J_{n,k_n}$ for Jordan blocks $J_{j,k_j}\in\mathbb{C}^{(k_j+1)\times (k_j+1)}$ \eqref{eq:JordanBlock}, with distinct nodes $z_j\in \mathbb{C}$, and a weight vector $w=\begin{bmatrix}
			\beta_1 e_{k_1+1}^\top & \dots & \beta_n e_{k_n+1}^\top 
		\end{bmatrix}^\top$. 
		Construct a Hessenberg matrix $H_m\in\mathbb{C}^{m\times m}$, with $m=\sum_{j = 1}^n(k_j+1)$, such that
		\begin{equation*}
			Q_m^H Z Q_m = H_m \quad \text{and}\quad Q_m e_1  = \frac{w}{\Vert w \Vert_2}
		\end{equation*}		
		hold for a unitary matrix $Q_m\in \mathbb{C}^{m\times m}$.
	\end{problem}
	The leading principal submatrix of size $k\times k$ of $H_m$ is the recurrence matrix $H_k$ which generates $Q_{k}\in\mathbb{C}^{m\times k}$ and $\{p_\ell\}_{\ell=0}^{k-1}$.
	The spectral information required to formulate Problem \ref{prob:IEP} for a Sobolev inner product $(.,.)_S$ are the nodes and weights that appear in the discretized inner product $\langle .,.\rangle_S$.

	\subsection{Examples}\label{sec:examples_MatrixForm}
	Using the discretized inner products in Section \ref{sec:examples} we illustrate how the Jordan matrix $Z$ and weight vector $w$ are constructed.
	
	\subsubsection{Same measures $\mu_r = \gamma_r \mu$}
	For Althammer polynomials \eqref{eq:AltPoly} and Laguerre-Sobolev polynomials \eqref{eq:LaguerreInprod} the discretized inner product is of the form
	\begin{equation*}
		\langle p,q\rangle_{S} = \sum_{j = 1}^n \vert \beta_j\vert^2 \left(q(z_j) p(z_j) +  \gamma q'(z_j) p'(z_j) \right).
	\end{equation*}
	The associated Jordan matrix $Z$ consists of Jordan blocks of size $2\times 2$,
	\begin{equation*}
		Z = \begin{bmatrix}
			z_1 & \sqrt{\gamma_1}\\
			& z_1\\
			& & z_2 & \sqrt{\gamma_{1}}\\
			& & & z_2\\
			& & & & \ddots\\
			& & & & & z_n & \sqrt{\gamma_{1}}\\
			& & & & & & z_n
		\end{bmatrix}\text{ and } w = \begin{bmatrix}
			0 \\
			\beta_1\\
			0\\
			\beta_2\\
			\vdots\\
			0\\
			\beta_n
		\end{bmatrix}.
	\end{equation*}

	\subsubsection{Discrete inner products}
	For the discretized inner product \eqref{eq:LagSob} the Jordan matrix consists of a single $2\times 2$ Jordan block and $n$ Jordan blocks of size $1\times 1$.
	The $2\times 2$ block is associated to the node $c=-1$ with the weight vector $\begin{bmatrix}
		0 & \sqrt{M}
	\end{bmatrix}^\top$ and the $1\times 1$ blocks to the Gauss-Laguerre nodes $\{x_j^{\textrm{La}}\}_{j=1}^n$ with associated Laguerre weights $\{\vert \beta_j^{\textrm{La}}\vert^2\}_{j=1}^n$,
	\begin{equation}\label{eq:discLagSobMV}
		Z = \begin{bmatrix}
			c &\frac{\sqrt{N}}{\sqrt{M}}\\
			& c\\
			& & x_1^{\textrm{La}}\\
			& & & x_2^{\textrm{La}}\\
			& & & & \ddots \\
			& & & & & x_n^{\textrm{La}}
		\end{bmatrix} \text{ and } w = \begin{bmatrix}
			0\\
			\sqrt{M}\\
			\beta_1^{\textrm{La}}\\
			\beta_2^{\textrm{La}}\\
			\vdots\\
			\beta_n^{\textrm{La}}
		\end{bmatrix}.
	\end{equation}
	
	For the inner product \eqref{eq:GRplusa} the matrix and vector have the same form
	\begin{equation*}
		Z = \begin{bmatrix}
			a &\frac{\sqrt{\gamma}}{\beta_0^R}\\
			& a\\
			& & x_1^R\\
			& & & x_2^R\\
			& & & & \ddots \\
			& & & & & x_n^R
		\end{bmatrix} \text{ and } w = \begin{bmatrix}
			0\\
			\beta_0^R\\
			\beta_1^R\\
			\beta_2^R\\
			\vdots\\
			\beta_n^R
		\end{bmatrix}.
	\end{equation*}

	However, the entries are different, since these are Gauss-Radau nodes $\{x_j^R\}_{j=0}^n$ and weights $\{\vert \beta_j^R\vert^2\}_{j=0}^n$, where $x_0^R = a$.
	
	\section{Generating Sobolev orthonormal polynomials}\label{sec:procedures}
	Thanks to the reformulation of Problem \ref{prob:generateDiscreteSOPs} into the matrix problem, Problem \ref{prob:IEP}, we can use numerical linear algebra techniques to generate SOPs.
	For similar problems this methodology has proven to be very powerful.
	For example, consider polynomials orthogonal with respect to an inner product, not involving derivatives, on the real line.
	The method of choice to generate these orthogonal polynomials is based on solving a Jacobi inverse eigenvalue problem with techniques from structured matrix theory \cite{GrHa84,Ru63,La99,Ru57}.
	Other examples are algorithms for modifying an inner product and the associated sequence of orthogonal polynomials or rational functions \cite{GrHa84,VBVBVa23} and for solving least squares problems \cite{BrNaTr21,ReAmGr91,Re91}.
	In the following sections we propose two methods to generate Sobolev orthonormal polynomials, the Arnoldi iteration from the study of Krylov subspace methods and an updating procedure based on the manipulation of structured matrices.

	\subsection{Arnoldi iteration}\label{sec:Arnoldi}
	The Arnoldi iteration \cite{Ar51}, given in Algorithm \ref{alg:Arnoldi}, is the advised way to generate an orthonormal nested basis $Q_k\in\mathbb{C}^{m\times k}$ for a Krylov subspace $\mathcal{K}_k(A,v)$ generated with a non-Hermitian matrix $A^H\neq A$.
	From Corollary \ref{cor:SOP_Qk_equivalence} it follows that for $A=Z$ and $v=w$, the resulting Hessenberg matrix $H_k\in\mathbb{C}^{k\times k}$ is the recurrence matrix for a sequence of $k$ SOPs.
	Note that for $Z$ and $w$ as in \eqref{eq:JordanBlock} and \eqref{eq:weightBlock}, the Arnoldi iteration cannot have an early breakdown.
	An implementation of the following algorithm is available online \cite{git_SOP}.
	\begin{algorithm}
		\caption{Arnoldi iteration 	\cite{Ar51}}\label{alg:Arnoldi}
		\begin{algorithmic}[1]
			\State \textbf{Input:} $A \in \mathbb{C}^{m\times m}$, $v\in\mathbb{C}^m$, integer $k\leq m$
			\State \textbf{Output:} Orthonormal matrix $Q_{k} \in \mathbb{C}^{m\times k}$, vector $q_{k+1}\in\mathbb{C}^m$, Hessenberg matrix $H_k \in\mathbb{C}^{k\times k}$ and $h_{k+1,k}\geq 0$ such that \eqref{eq:RR_Krylov} is satisfied.
			\Procedure{Arnoldi\_iteration}{$A,v,k$}
			\State $q_1 = v/ \Vert v\Vert_2$
			\For{$\ell=1,2,\dots,k$}
			\State $q_{\ell+1} = Aq_{\ell}$
			\For{$j=1,2,\dots, \ell$}\Comment{Orthogonalization}
			\State $h_{j,\ell} = q_j^H q_{\ell+1}$
			\State $q_{\ell+1} = q_{\ell+1} - h_{j,\ell} q_j$ 
			\EndFor
			\State $h_{\ell+1,\ell} = \Vert q_{\ell+1}\Vert_2$
			\If{$h_{\ell+1,\ell} = 0$}
			\State Return $Q_\ell = \begin{bmatrix}
				q_1 & \dots & q_\ell
			\end{bmatrix}$ and $H_\ell = \left[h_{i,j}\right]_{i,j=1}^{\ell}$ \Comment{Breakdown}
			\EndIf
			\State $q_{\ell+1} = q_{\ell+1}/h_{\ell+1,\ell}$ \Comment{Normalization}
			\EndFor
			\State Return $Q_k = \begin{bmatrix}
				q_1 & \dots & q_k
			\end{bmatrix}$ and $H_k = \left[h_{i,j}\right]_{i,j=1}^{k}$
			\EndProcedure\end{algorithmic}
	\end{algorithm}
	
	This algorithm is the matrix version of the discretized Stieltjes procedure for generating Sobolev orthonormal polynomials \cite{GaZh95,GrHa84}.
	
	Another Krylov subspace method suitable for non-Hermitian matrices is the biorthogonal Lanczos iteration \cite{La50}.
	The biorthogonal Lanczos iteration applied to Jordan matrices is related to formal orthogonal polynomials \cite{PoPrSt18}.

	\subsection{Updating procedure}\label{sec:up}
	Based on an idea that can be traced back to Rutishauser \cite{Ru63}, Gragg and Harrod developed a numerically stable procedure to generate a Jacobi matrix representing a sequence of orthonormal polynomials on the real line \cite{GrHa84}.
	This procedure has been generalized to several other cases, e.g., to orthogonal polynomials on the unit circle \cite{AmGrRe91b} and to orthogonal rational functions \cite{VBVBVa22}.
	
	Here we generalize it to Sobolev orthogonal polynomials.
	The Sobolev inner product of interest \eqref{eq:KrylInprod} can be split into the sum of two inner products:
	\begin{align*}
		\langle p,q\rangle_S
		&=\underbrace{\sum_{j = 1}^{n-1} \vert \beta_j\vert^2 \left(\sum_{r=0}^{k_j} \left\vert \frac{\prod_{i=1}^{k_j-r} \alpha_i^{(j)}}{(k_j-r)!}\right\vert^2 \overline{q^{(k_j-r)}(z_j)} p^{(k_j-r)}(z_j)\right)}_{=:\langle p,q\rangle_{\hat{S}}} \\
		&\qquad\qquad + \underbrace{\vert \beta_n\vert^2 \left(\sum_{r=0}^{k_n} \left\vert \frac{\prod_{i=1}^{k_n-r} \alpha_i^{(n)}}{(k_n-r)!}\right\vert^2 \overline{q^{(k_n-r)}(z_n)} p^{(k_n-r)}(z_n)\right)}_{=:\langle p,q\rangle_{1}}.
	\end{align*}
	This corresponds to splitting the associated matrix and vector as
	\begin{align*}
		Z &= \left[
		\begin{array}{ccc|c}
			J_{1,k_1} & &  \\
			& \ddots& &  \\
			& & J_{n-1,k_{n-1}}\\ \hline
			& & & J_{n,k_n}
		\end{array}
		\right] =: \left[
		\begin{array}{c|c}
			\hat{Z}&  \\ \hline
			& J_{n,k_n}
		\end{array}
		\right] \text{ and }\\
		w &= \left[
		\begin{array}{c}
			\beta_1 e_{k_1+1}\\
			\vdots\\
			\beta_{n-1}e_{k_{n-1}+1}\\ \hline
			\beta_n e_{k_n+1}
		\end{array}
		\right] =:  \left[
		\begin{array}{c}
			\vline \\
			\hat{w}\\
			\vline\\ \hline
			\beta_n e_{k_n+1}
		\end{array}
		\right].
	\end{align*}
	The sequence of SOPs for $\langle .,. \rangle_{\hat{S}}$ is found by solving a HIEP with $\hat{Z}$ and $\hat{w}$ and for $\langle .,. \rangle_{1}$ by solving a HIEP with $J_{n,k_n}$ and $\beta_n e_{k_n+1}$.
	
	The key idea of updating procedures is to start from these two solutions and efficiently construct the SOPs for $\langle .,.\rangle_{S} = \langle .,. \rangle_{\hat{S}} + \langle .,.\rangle_{1}$.
	Such a procedure is composed of three steps, which are discussed in the following three sections.
	First the solutions to these two smaller problems, of size $\sum_{j=1}^{n-1} (k_j+1)$ and $(k_n+1)$, are embedded in matrices of size $\sum_{j=1}^{n} (k_j+1)$.
	The resulting recurrence matrix has the nodes of $\langle .,.\rangle_S$ as its eigenvalues.
	Second, the first column of the corresponding basis matrix is changed such that it is equal to the normalized weights of $\langle .,.\rangle_S$.
	This is achieved by multiplying with a unitary matrix.
	Since this changes the basis matrix, the recurrence matrix must be changed accordingly, i.e., by applying a similarity transformation with that unitary matrix.
	This similarity transformation alters the structure of the recurrence matrix, it is no longer a Hessenberg matrix.
	Third, the structure of the recurrence matrix is restored to Hessenberg form by means of unitary similarity transformations.
	
	Whereas the literature only considers inner products $\langle .,.\rangle_{1}$ consisting of a single term, which has a scalar solution, a $1\times 1$ matrix. 
	Here we consider inner products with $k_n+1$ terms and thus, these have a $(k_n+1)\times (k_n+1)$ matrix solution.
	The idea of updating with larger matrices first appeared in~\cite{VB21}.

	\subsubsection{Embedding of partial solutions}
	First, we need the SOPs for $\langle .,.\rangle_{1}$, i.e., the solution of the HIEP formulated for a single Jordan block $Z = J_{n,k_n}\in\mathbb{C}^{(k_n+1)\times(k_n+1)}$ and corresponding weight vector $w=\beta e_{k_n+1}$.
	The solution is
	\begin{equation*}
		H_{k_n+1} = J_{n,k_n}^H \text{ and } Q_{k_n+1} = \begin{bmatrix}
			& & 1\\
			& \rddots\\
			1
		\end{bmatrix}.
	\end{equation*}
	Second, for the HIEP for $\langle .,.\rangle_{\hat{S}}$ we assume, without loss of generality, that we know the solution $\hat{H}\in\mathbb{C}^{(m-k_n-1)\times (m-k_n-1)}$ and $\hat{Q}\in\mathbb{C}^{(m-k_n-1)\times (m-k_n-1)}$.
	If $\hat{Z}$ is a single Jordan block, the solution is obtained as described above. Otherwise we can build the solution by running the procedure described below, starting from a single Jordan block and adding one block at a time.\\
	Third, these two solutions are embedded in the larger matrices
	\begin{equation*}
		\dot{Q} := \begin{bmatrix}
			\hat{Q}\\
			& Q_{k_n+1}
		\end{bmatrix} \text{ and } \dot{H} := \begin{bmatrix}
			\hat{H}\\
			& J_{n,k_n}^H
		\end{bmatrix}.
	\end{equation*}
	The basis matrix is unitary, $\dot{Q}^H \dot{Q} = I$, and the Hessenberg matrix is similar to the given Jordan matrix $\dot{Q}^H{Z} \dot{Q} = \dot{H}$.
	However, the first entries of the eigenvectors are not equal to the given weights $\dot{Q} e_1 \neq \frac{w}{\Vert w\Vert_2}$.
	
	\subsubsection{Introduce weights in basis matrix}
	A plane rotation is applied to $\dot{Q}$, whose action alters the first column such that it equals the normalized weight vector $\frac{w}{\Vert w\Vert_2}$.
	Plane rotations $P_k$ are essentially $2\times2$ unitary matrices with parameters $a,b\in\mathbb{C}$, $\vert a \vert^2 + \vert b \vert^2 = 1$, and in this paper they are of the form
	\begin{equation*}
		P_k := \begin{bmatrix}
			\bar{a} &  & -\bar{b} \\
			& I_{m-k-2} \\
			b  & & a \\
			& & & I_{k}
		\end{bmatrix}\in\mathbb{C}^{m\times m}.
	\end{equation*}
	Take $P_{k_n}$ with parameter $a = \frac{\Vert \hat{w}\Vert_2}{\sqrt{\Vert \hat{w}\Vert_2 + \Vert \beta_n e_{k_n} \Vert_2}}$,
	then the matrix $\dot{Q} P_{k_n}^H$ satisfies $\dot{Q} P_{k_n}^H e_1 = \frac{w}{\Vert w\Vert_2}$ and is unitary, since plane rotations are unitary.
	Because the basis matrix in the Jordan canonical decomposition changes, the recurrence matrix changes accordingly,
	\begin{equation*}
		\dot{Q}^H Z \dot{Q} = \dot{H} \rightarrow P_{k_n}\dot{Q}^H Z \dot{Q} P_{k_n}^H = P_{k_n} \dot{H} P_{k_n}^H.
	\end{equation*}
	The resulting recurrence matrix $P_{k_n} \dot{H} P_{k_n}^H$ is not a Hessenberg matrix.
	An example of this change and of the resulting matrix is given in the following example
	\begin{example} \label{example:change}
		The structure of $P_{k_n} \dot{H} P_{k_n}^H$ for $k_n=2$ is shown in Figure \ref{fig:Hdotchange}, a generic nonzero entry is denoted by $\times$. 
		The matrix $P_{k_n} \dot{H} P_{k_n}^H$ is no longer of Hessenberg structure.
		\begin{figure}[!ht]
			\centering
			{\begin{subfigure}[b]{0.33\textwidth} 
	\scalebox{0.7}{\begin{tikzpicture} 
	\matrix (M) [matrix of nodes,left delimiter={[},right delimiter={]}] 
	{$\times$&$\times$&$\times$&$\times$&$\times$&$\times$&&&\\
		$\times$&$\times$&$\times$&$\times$&$\times$&$\times$&&&\\
		&$\times$&$\times$&$\times$&$\times$&$\times$&&&\\
		&&$\times$&$\times$&$\times$&$\times$&&&\\
		&&&$\times$&$\times$&$\times$&&&\\
		&&&&$\times$&$\times$&&&\\
		&&&&&&$\times$&&\\
		&&&&&&$\times$&$\times$&\\
		&&&&&&&$\times$&$\times$\\
	};
	\node[label=center: {{\Large $\rightarrow$}}]() at (3.5,0){};
\end{tikzpicture} }
	\caption*{$\dot{H}$}
\end{subfigure} 
\hspace{0.7cm}
\begin{subfigure}[b]{0.33\textwidth} 
	\scalebox{0.7}{\begin{tikzpicture} 
 \matrix (M) [matrix of nodes,left delimiter={[},right delimiter={]}] 
{$\times$&$\times$&$\times$&$\times$&$\times$&$\times$&$\times$&&\\
$\times$&$\times$&$\times$&$\times$&$\times$&$\times$&$\times$&&\\
&$\times$&$\times$&$\times$&$\times$&$\times$&&&\\
&&$\times$&$\times$&$\times$&$\times$&&&\\
&&&$\times$&$\times$&$\times$&&&\\
&&&&$\times$&$\times$&&&\\
$\times$&$\times$&$\times$&$\times$&$\times$&$\times$&$\times$&&\\
$\times$&&&&&&$\times$&$\times$&\\
&&&&&&&$\times$&$\times$\\
};
 \end{tikzpicture} }
	\caption*{$P_{k_n}\dot{H}P_{k_n}^H$}
\end{subfigure} }
			\caption{On the left, the structure of $\dot{H}$ and on the right, $P_{k_n} \dot{H} P_{k_n}^H$, the matrix obtained after introducing the new weights as first entries in the last $3$ $(=k_n+1)$ eigenvectors in the basis matrix and applying the corresponding transformation to the recurrence matrix. Generic nonzeros are denoted by $\times$.}
			\label{fig:Hdotchange}
		\end{figure}
	\end{example}

	\subsubsection{Restore the Hessenberg structure}
	Via unitary similarity transformations the matrix $P_{k_n}\dot{H}P_{k_n}^H$ is reduced to Hessenberg form without altering the first column of $\dot{Q} P_{k_n}^H$.
	To this end, any unitary matrix of the form $(1\oplus \breve{Q})
\in\mathbb{C}^{m\times m}$ can be used.
	We use an approach that works column by column, starting at the first column and working up to column $(m-2)$.
	In each column the entries below the first subdiagonal are eliminated.
	This is achieved by a unitary similarity transformation with $\breve{Q}^{\left[i\right]}$.
	We define $\dot{H}^{\left[0\right]} := P_{k_n}\dot{H}P_{k_n}^H$, then by $\dot{H}^{\left[i\right]}:=\left(\breve{Q}^{\left[i\right]}\right)^H\dot{H}^{\left[i-1\right]}\breve{Q}^{\left[i\right]}$ we denote the matrix which is unitarily similar to $\dot{H}^{\left[i-1\right]}$, where the $i$th column of $\dot{H}^{\left[i-1\right]}$ is restored to Hessenberg structure.
	Thus, the product of these matrices forms the unitary matrix $(1\oplus \breve{Q}) = \prod_{i=1}^{m-2}\breve{Q}^{\left[i\right]}$, which restores the Hessenberg structure of $\dot{H}$ without altering its eigenvalues, i.e., the unitary similarity transformation $(1\oplus \breve{Q})^H \dot{H} (1\oplus \breve{Q})$.
	
	We discuss two choices for the matrices $\breve{Q}^{\left[i\right]}$, Householder reflectors and a product of plane rotations.
	The following procedure first appeared in \cite{VB21} for orthogonal polynomials for an inner product without derivatives.
	An example illustrates our column by column approach and afterwards we describe the process in more detail.
	\begin{example}\label{example:process_red}
		For the same example as in Example \ref{example:change}, Figure \ref{fig:Hdots} shows the structure of $\dot{H}^{\left[i\right]}$.
		In each step the entries marked ${\color{red} \star}$ are isolated in a vector $c\in\mathbb{C}^r$.
		This vector is reduced to $\alpha e_1$ by a unitary matrix $\breve{Q}_c \in\mathbb{C}^{r\times r}$, i.e., $\breve{Q}_c c = \alpha e_1$.
		Next, the matrix $\breve{Q}_c$ is embedded into the matrix $\breve{Q}^{\left[i\right]}\in\mathbb{C}^{m\times m}$ such that $\dot{H}^{\left[i+1\right]} = \breve{Q}^{\left[i\right]} \dot{H}^{\left[i\right]} \left(\breve{Q}^{\left[i\right]}\right)^H$ has Hessenberg structure in its first $(i+1)$ columns.
		\begin{figure}[!ht]
			\centering
			{\begin{subfigure}[b]{0.25\textwidth} 
	\scalebox{0.6}{\begin{tikzpicture} 
 \matrix (M) [matrix of nodes,left delimiter={[},right delimiter={]}] 
{$\times$&$\times$&$\times$&$\times$&$\times$&$\times$&&&\\
$\times$&$\times$&$\times$&$\times$&$\times$&$\times$&&&\\
&$\times$&$\times$&$\times$&$\times$&$\times$&&&\\
&&$\times$&$\times$&$\times$&$\times$&&&\\
&&&$\times$&$\times$&$\times$&&&\\
&&&&$\times$&$\times$&&&\\
&&&&&&$\times$&&\\
&&&&&&$\times$&$\times$&\\
&&&&&&&$\times$&$\times$\\
};
 \end{tikzpicture} }
	\caption*{$\dot{H}$}
\end{subfigure} 
\hspace{1cm} 
\begin{subfigure}[b]{0.25\textwidth} 
	\scalebox{0.6}{\begin{tikzpicture} 
	\matrix (M) [matrix of nodes,left delimiter={[},right delimiter={]}] 
	{$\times$&$\times$&$\times$&$\times$&$\times$&$\times$&$\times$&&\\
		{\color{red}$\star$}&$\times$&$\times$&$\times$&$\times$&$\times$&$\times$&&\\
		&$\times$&$\times$&$\times$&$\times$&$\times$&&&\\
		&&$\times$&$\times$&$\times$&$\times$&&&\\
		&&&$\times$&$\times$&$\times$&&&\\
		&&&&$\times$&$\times$&&&\\
		{\color{red}$\star$}&$\times$&$\times$&$\times$&$\times$&$\times$&$\times$&&\\
		{\color{red}$\star$}&&&&&&$\times$&$\times$&\\
		&&&&&&&$\times$&$\times$\\
	};
\end{tikzpicture} }
	\caption*{$\dot{H}^{\left[0\right]}$}
\end{subfigure} 
\hspace{1cm} 
\begin{subfigure}[b]{0.25\textwidth} 
	\scalebox{0.6}{\begin{tikzpicture} 
 \matrix (M) [matrix of nodes,left delimiter={[},right delimiter={]}] 
{$\times$&$\times$&$\times$&$\times$&$\times$&$\times$&$\times$&$\times$&\\
$\times$&$\times$&$\times$&$\times$&$\times$&$\times$&$\times$&$\times$&\\
&{\color{red}$\star$}&$\times$&$\times$&$\times$&$\times$&$\times$&&\\
&&$\times$&$\times$&$\times$&$\times$&&&\\
&&&$\times$&$\times$&$\times$&&&\\
&&&&$\times$&$\times$&&&\\
&{\color{red}$\star$}&$\times$&$\times$&$\times$&$\times$&$\times$&$\times$&\\
&{\color{red}$\star$}&$\times$&$\times$&$\times$&$\times$&$\times$&$\times$&\\
&{\color{red}$\star$}&&&&&$\times$&$\times$&$\times$\\
};
 \end{tikzpicture} }
	\caption*{$\dot{H}^{\left[1\right]}$}
\end{subfigure} 

\vspace{0.5cm} 

\begin{subfigure}[b]{0.25\textwidth} 
	\scalebox{0.6}{\begin{tikzpicture} 
 \matrix (M) [matrix of nodes,left delimiter={[},right delimiter={]}] 
{$\times$&$\times$&$\times$&$\times$&$\times$&$\times$&$\times$&$\times$&$\times$\\
$\times$&$\times$&$\times$&$\times$&$\times$&$\times$&$\times$&$\times$&$\times$\\
&$\times$&$\times$&$\times$&$\times$&$\times$&$\times$&$\times$&$\times$\\
&&{\color{red}$\star$}&$\times$&$\times$&$\times$&$\times$&&\\
&&&$\times$&$\times$&$\times$&&&\\
&&&&$\times$&$\times$&&&\\
&&{\color{red}$\star$}&$\times$&$\times$&$\times$&$\times$&$\times$&$\times$\\
&&{\color{red}$\star$}&$\times$&$\times$&$\times$&$\times$&$\times$&$\times$\\
&&{\color{red}$\star$}&$\times$&$\times$&$\times$&$\times$&$\times$&$\times$\\
};
 \end{tikzpicture} }
	\caption*{$\dot{H}^{\left[2\right]}$}
\end{subfigure} 
\hspace{1cm} 
\begin{subfigure}[b]{0.25\textwidth} 
	\scalebox{0.6}{\begin{tikzpicture} 
 \matrix (M) [matrix of nodes,left delimiter={[},right delimiter={]}] 
{$\times$&$\times$&$\times$&$\times$&$\times$&$\times$&$\times$&$\times$&$\times$\\
$\times$&$\times$&$\times$&$\times$&$\times$&$\times$&$\times$&$\times$&$\times$\\
&$\times$&$\times$&$\times$&$\times$&$\times$&$\times$&$\times$&$\times$\\
&&$\times$&$\times$&$\times$&$\times$&$\times$&$\times$&$\times$\\
&&&{\color{red}$\star$}&$\times$&$\times$&$\times$&&\\
&&&&$\times$&$\times$&&&\\
&&&{\color{red}$\star$}&$\times$&$\times$&$\times$&$\times$&$\times$\\
&&&{\color{red}$\star$}&$\times$&$\times$&$\times$&$\times$&$\times$\\
&&&{\color{red}$\star$}&$\times$&$\times$&$\times$&$\times$&$\times$\\
};
 \end{tikzpicture} }
	\caption*{$\dot{H}^{\left[3\right]}$}
\end{subfigure} 
\hspace{1cm} 
\begin{subfigure}[b]{0.25\textwidth} 
	\scalebox{0.6}{\begin{tikzpicture} 
 \matrix (M) [matrix of nodes,left delimiter={[},right delimiter={]}] 
{$\times$&$\times$&$\times$&$\times$&$\times$&$\times$&$\times$&$\times$&$\times$\\
$\times$&$\times$&$\times$&$\times$&$\times$&$\times$&$\times$&$\times$&$\times$\\
&$\times$&$\times$&$\times$&$\times$&$\times$&$\times$&$\times$&$\times$\\
&&$\times$&$\times$&$\times$&$\times$&$\times$&$\times$&$\times$\\
&&&$\times$&$\times$&$\times$&$\times$&$\times$&$\times$\\
&&&&{\color{red}$\star$}&$\times$&$\times$&&\\
&&&&{\color{red}$\star$}&$\times$&$\times$&$\times$&$\times$\\
&&&&{\color{red}$\star$}&$\times$&$\times$&$\times$&$\times$\\
&&&&{\color{red}$\star$}&$\times$&$\times$&$\times$&$\times$\\
};
 \end{tikzpicture} }
	\caption*{$\dot{H}^{\left[4\right]}$}
\end{subfigure} 

\vspace{0.5cm} 

\begin{subfigure}[b]{0.25\textwidth} 
	\scalebox{0.6}{\begin{tikzpicture} 
 \matrix (M) [matrix of nodes,left delimiter={[},right delimiter={]}] 
{$\times$&$\times$&$\times$&$\times$&$\times$&$\times$&$\times$&$\times$&$\times$\\
$\times$&$\times$&$\times$&$\times$&$\times$&$\times$&$\times$&$\times$&$\times$\\
&$\times$&$\times$&$\times$&$\times$&$\times$&$\times$&$\times$&$\times$\\
&&$\times$&$\times$&$\times$&$\times$&$\times$&$\times$&$\times$\\
&&&$\times$&$\times$&$\times$&$\times$&$\times$&$\times$\\
&&&&$\times$&$\times$&$\times$&$\times$&$\times$\\
&&&&&{\color{red}$\star$}&$\times$&$\times$&$\times$\\
&&&&&{\color{red}$\star$}&$\times$&$\times$&$\times$\\
&&&&&{\color{red}$\star$}&$\times$&$\times$&$\times$\\
};
 \end{tikzpicture} }
	\caption*{$\dot{H}^{\left[5\right]}$}
\end{subfigure} 
\hspace{1cm} 
\begin{subfigure}[b]{0.25\textwidth} 
	\scalebox{0.6}{\begin{tikzpicture} 
 \matrix (M) [matrix of nodes,left delimiter={[},right delimiter={]}] 
{$\times$&$\times$&$\times$&$\times$&$\times$&$\times$&$\times$&$\times$&$\times$\\
$\times$&$\times$&$\times$&$\times$&$\times$&$\times$&$\times$&$\times$&$\times$\\
&$\times$&$\times$&$\times$&$\times$&$\times$&$\times$&$\times$&$\times$\\
&&$\times$&$\times$&$\times$&$\times$&$\times$&$\times$&$\times$\\
&&&$\times$&$\times$&$\times$&$\times$&$\times$&$\times$\\
&&&&$\times$&$\times$&$\times$&$\times$&$\times$\\
&&&&&$\times$&$\times$&$\times$&$\times$\\
&&&&&&{\color{red}$\star$}&$\times$&$\times$\\
&&&&&&{\color{red}$\star$}&$\times$&$\times$\\
};
 \end{tikzpicture} }
	\caption*{$\dot{H}^{\left[6\right]}$}
\end{subfigure} 
\hspace{1cm} 
\begin{subfigure}[b]{0.25\textwidth} 
	\scalebox{0.6}{\begin{tikzpicture} 
 \matrix (M) [matrix of nodes,left delimiter={[},right delimiter={]}] 
{$\times$&$\times$&$\times$&$\times$&$\times$&$\times$&$\times$&$\times$&$\times$\\
$\times$&$\times$&$\times$&$\times$&$\times$&$\times$&$\times$&$\times$&$\times$\\
&$\times$&$\times$&$\times$&$\times$&$\times$&$\times$&$\times$&$\times$\\
&&$\times$&$\times$&$\times$&$\times$&$\times$&$\times$&$\times$\\
&&&$\times$&$\times$&$\times$&$\times$&$\times$&$\times$\\
&&&&$\times$&$\times$&$\times$&$\times$&$\times$\\
&&&&&$\times$&$\times$&$\times$&$\times$\\
&&&&&&$\times$&$\times$&$\times$\\
&&&&&&&$\times$&$\times$\\
};
 \end{tikzpicture} }
	\caption*{$\dot{H}^{\left[7\right]}$}
\end{subfigure} }
			\caption{Structure of the recurrence matrix throughout the column by column approach that restores the Hessenberg structure of $\dot{H}^{\left[0\right]}$.
				Entries ${\color{red} \star}$ denote the entries which are used to construct $\breve{Q}^{\left[i\right]}$, the matrix which eliminate the entries below the first subdiagonal of $\dot{H}^{\left[i-1\right]}$ by unitary similarity transformation.
			}
			\label{fig:Hdots}
		\end{figure}
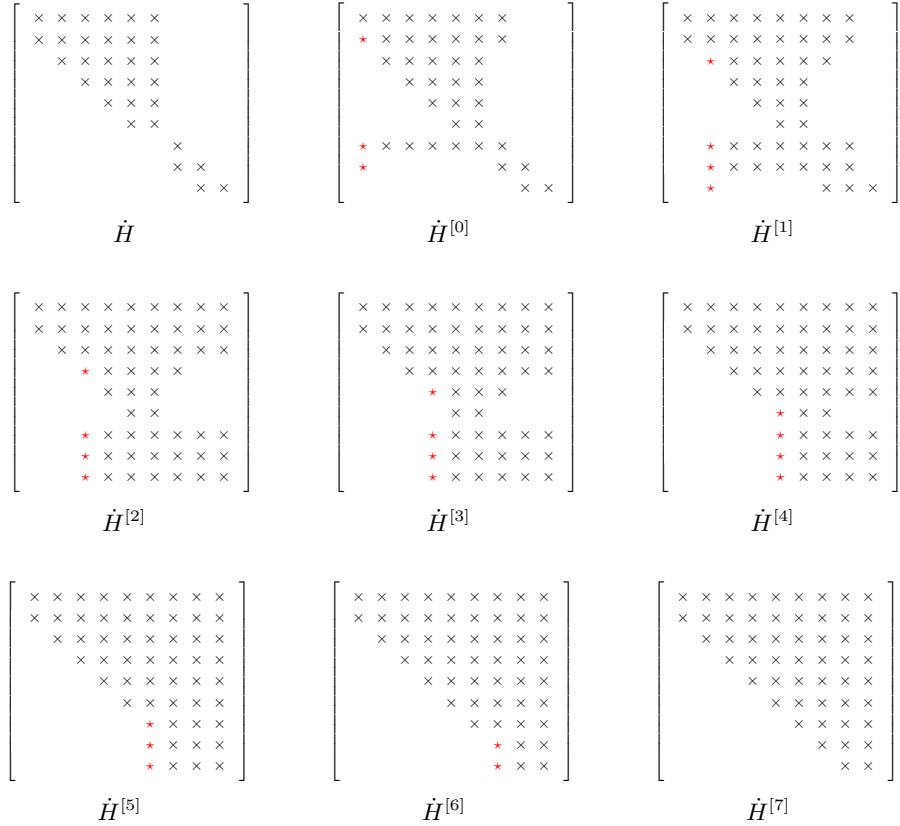
	\end{example}
	We now describe the general process illustrated in the above example, i.e., how to construct matrices $\breve{Q}^{\left[i\right]}$.
	For column $i$ we isolate the relevant entries in a vector $c$, these are the entry on the first subdiagonal $h_{i+1,i}$ and all nonzero entries below this entry, i.e., the entries that are not conform with the Hessenberg structure. 
	If $1\leq i<m-k_n$, set $r := \min\{k_n+1,i+1\}$, and the vector is $c = \begin{bmatrix}
		h_{i+1,i}&
		\dot{h}^{\left[i\right]}_{m-k_n+1,i}&
		\dot{h}^{\left[i\right]}_{m-k_n+2,i}&
		\dots&
		\dot{h}^{\left[i\right]}_{m-k_n+r,i}
	\end{bmatrix}^\top$.
	If $i\geq m-k_n$, set $r:= \min\{k_n+1,m-k_n+i\}$, and the vector is $c = \begin{bmatrix}
		\dot{h}^{\left[i\right]}_{i+1,i}&
		\dot{h}^{\left[i\right]}_{i+2,i}&
		\dots&
		\dot{h}^{\left[i\right]}_{i+r,i}
	\end{bmatrix}^\top$.
	Next, the vector $c$ is reduced to a multiple of $e_1$, this can be done by, e.g., a Householder reflector or a product of plane rotations.
	
	Let $\alpha = \pm \Vert x \Vert_2 e^{\imath \arg(x e_1)}\in\mathbb{C}$, then the Householder reflector \cite{KaSa99}
	\begin{equation*}
		R = I - 2 \frac{y y^H}{y^H y}, \quad \text{with } y =c + \alpha e_1
	\end{equation*}
	transforms $c$ into a multiple of $e_1$, i.e., $R c = -\alpha e_1$.
	
	When using plane rotations, the entries of $c$ are annihilated one by one.
	The vector $c\in\mathbb{C}^r$ is reduced by the product of plane rotations $\prod_{j=1}^{r-1}P_{j-2}\in\mathbb{C}^{r\times r}$, i.e., $\left(\prod_{j=1}^{r-1}P_{j-2}\right) c = \alpha e_1$, where $\vert\alpha\vert = \Vert c\Vert_2$.
	Note that this product of plane rotations is one of many possible choices.
	
	The matrix thus obtained, $R\in\mathbb{C}^{r\times r}$ for Householder reflectors or $\prod_{j=1}^{r-1}P_{j-2}\in\mathbb{C}^{r\times r}$ for plane rotations, must be embedded in a matrix $\breve{Q}^{\left[i\right]}$ of size $m\times m$ in such a way that it acts on the appropriate rows and column of $\dot{H}^{\left[i-1\right]}$, e.g., in Figure \ref{fig:Hdots} it must act on the entries denoted as {\color{red} $\star$}.
	The unitary similarity transform with these embedded matrices $\left(\breve{Q}^{\left[i\right]}\right)^H \dot{H}^{\left[i-1\right]} \breve{Q}^{\left[i\right]}$ results in $\dot{H}^{\left[i\right]}$ and the above process can be repeated until we reach $\dot{H}^{\left[m-2\right]}$.
	A MATLAB implementation of the updating procedure is available online \cite{git_SOP}.
	
	\section{Numerical experiments}\label{sec:num}
	Numerical experiments are performed which confirm the validity of the procedures proposed in this paper, i.e., the Arnoldi iteration and updating procedure described in Sections \ref{sec:Arnoldi} and \ref{sec:up}, respectively.
	These are also compared to the modified Chebyshev method and discretized Stieltjes procedure proposed by Gautschi and Zhang \cite{GaZh95}. 
	MATLAB implementations are available on the internet \url{http://www.cs.purdue.edu/archives/2002/wxg/codes}, this code computes a sequence of monic polynomials.
	For the updating procedure we report only the results using plane rotations, since the updating procedure using Householder reflectors is very similar for the experiments performed here.
	
	It is important to note that the Stieltjes procedure and Arnoldi iteration require the storage of the whole sequence of SOPs, i.e., the $m\times k$ basis matrix, in order to compute the recurrence matrix.
	The updating procedure only requires the storage of a single vector, the weight vector $w$ of size $m$.

	Section \ref{sec:exp_quant} reports quantitative experiments, we compare our computed results to numerical results or analytical expressions available in the literature.
	For SOPs of high degree, these expressions are challenging to evaluate due to the build up of rounding errors and possible overflow or underflow.
	Therefore, in Section \ref{sec:exp_qual}, we perform qualitative experiments for higher degree SOPS based on the known location of roots of SOPS and the expected behavior of function approximation using SOPs.
	
	\subsection{Quantitative experiments}\label{sec:exp_quant}
	Two quantitative experiments are performed, computing roots of SOPs and generating higher order recurrence coefficients.
	We compare the discrete Stieltjes procedure, Arnoldi iteration and updating procedure.
	
	\subsubsection{Smallest roots of Laguerre-Sobolev polynomials}
	Tables containing the smallest roots of Laguerre-Sobolev polynomials are available in \cite{MaPePi96}.
	We discretize the inner product \eqref{eq:LaguerreInprod} with a $10$-nodes Gauss-Laguerre quadrature rule.
	Using the resulting discretized inner product \eqref{eq:LaguerreInprod_disc} we compute the recurrence matrix $H_{10}$ for the SOPs $\{p_k\}_{k=0}^9$.
	The eigenvalues of the $k\times k$ leading principal submatrix $H_k$ of $H_{10}$ correspond to the roots of $p_k$.
	Table \ref{table:roots_LagSob1} and Table~\ref{table:roots_LagSob2} show the smallest roots of $p_k$ for the parameters $\gamma = 1,\alpha = -1/2$ and $\gamma = 0.2, \alpha = -0.9$, respectively.
	Our computed roots agree with those reported in \cite{MaPePi96}, which are given up to an accuracy of $10^{-8}$.
	
	\begin{table}[!ht]
		\begin{center}
			\caption{The numerically computed smallest roots of the $k$th degree Laguerre-Sobolev polynomials with parameters $\gamma = 1$ and $\alpha = -1/2$.}
			\label{table:roots_LagSob1}
			\begin{tabular}{llll}
				\toprule
				$k$& Stieltjes procedure           & Arnoldi iteration             & Updating procedure            \\ \midrule
				1  & 0.5                 & 0.5                 & 0.5                 \\
				2  & 0.0515973733627622  & 0.0515973733627619  & 0.0515973733627622  \\
				3  & -0.0709467328567679 & -0.0709467328567679 & -0.0709467328567679 \\
				4  & -0.0874916640141531 & -0.0874916640141535 & -0.0874916640141535 \\
				5  & -0.0799899984977785 & -0.0799899984977785 & -0.0799899984977783 \\
				6  & -0.0689833230536414 & -0.0689833230536414 & -0.068983323053641 \\
				7  & -0.0591475889953297 & -0.059147588995331 & -0.0591475889953299 \\
				8  & -0.0512004191713637 & -0.0512004191713639 & -0.0512004191713638 \\
				9  & -0.0449179698365343 & -0.0449179698365336 & -0.0449179698365332 \\
				10 & -0.0399294766750048 & -0.0399294766753265 & -0.0399294766753251
\\
				\bottomrule
			\end{tabular}
		\end{center}
	\end{table}
	
	\begin{table}[!ht]
		\begin{center}
			\caption{The numerically computed smallest roots of the $k$th degree Laguerre-Sobolev polynomials with parameters $\gamma = 0.2$ and $\alpha = -0.9$.}
			\label{table:roots_LagSob2}
			\begin{tabular}{llll}
				\toprule			
				$k$& Stieltjes procedure           & Arnoldi iteration             & Updating procedure            \\ \midrule
				1  & 0.1                 & 0.1               & 0.1                 \\
				2  & -0.0261349584030075  & -0.0261349584030074  & -0.0261349584030074  \\
				3  & -0.0750911669982844 & -0.0750911669982843 & -0.0750911669982843 \\
				4  & -0.0830880010863874 & -0.0830880010863875 & -0.0830880010863876 \\
				5  & -0.0777522363825044 & -0.0777522363825043 & -0.0777522363825047 \\
				6  & -0.0694388792472847 & -0.0694388792472855 & -0.0694388792472857 \\
				7  & -0.0612413492735956 & -0.0612413492735963 & -0.0612413492735955 \\
				8  & -0.0539763658835047 & -0.0539763658835064 & -0.0539763658835068 \\
				9  & -0.0477639920520727 & -0.047763992052076 & -0.0477639920520759 \\
				10 & -0.0425173192185196 & -0.042517319218519 & -0.0425173192185195
\\
				\bottomrule
			\end{tabular}
			
		\end{center}
	\end{table}	
	
	\subsubsection{Higher-order recurrence relations}
	A recent paper \cite{HeHuLaMa22} describes a five-term higher order recurrence relation for SOPs with respect to the discrete Laguerre-Sobolev inner product \eqref{eq:LagSob}.
	The corresponding recurrence matrix is the pentadiagonal matrix $B_m\in \mathbb{C}^{m\times m}$ and can be obtained as follows:
	\begin{enumerate}
		\item Compute an $(m+1)$-point Gauss-Laguerre quadrature rule, with nodes and weights $\{ x_k^{\textrm{La}}\}_{k=1}^{m+1},\{ \vert \beta_k^{\textrm{La}} \vert^2\}_{k=1}^{m+1}$ and construct a discretized inner product of the form \eqref{eq:discreteSobInprod_disc}.
		\item Solve the HIEP for the matrix $(Z-cI)$ and vector $w$, with the matrix and vector as in \eqref{eq:discLagSobMV}. The solution is the Hessenberg matrix $H_{m+1}$.
		\item The pentadiagonal matrix $B_m\in\mathbb{C}^{m\times m}$ is obtained by taking the $m\times m$ leading principal submatrix of $ H_{m+1}^2$.
	\end{enumerate}
	We compare the matrix $\tilde{B}_5$ reported in \cite{HeHuLaMa22} with the coefficients from $B_5$ computed as described above.
	The error is measured in the relative Frobenius norm ${(\Vert B_5-\tilde{B}_5\Vert_\textrm{fro})}/{\Vert \tilde{B}_5 \Vert_\textrm{fro}}$.
	For the Arnoldi iteration we obtain an error of $4.5\mathrm{e}-16$ and for the updating procedure $8.5\mathrm{e}-16$.
	Thus, both methods compute the coefficients up to high accuracy.
	
	On a theoretical note, in the matrix interpretation the five-term recurrence relation might follow from combining the Faber-Manteuffel theorem \cite{FaMa84} with the following equality for the Jordan matrix $((Z-cI)^2)^H = (Z-cI)^2$. Or, it can be studied by splitting $Z$ into the sum of a diagonal Hermitian matrix and a rank one matrix containing only the off-diagonal entry, as in \cite{BaMa00}.
	Neither of these results are directly applicable to this higher order recurrence relation, exploring this further is subject of future research.	
	
	\subsection{Qualitative experiments}\label{sec:exp_qual}
	To test the numerical procedures on higher degree SOPs we perform two qualitative experiments.
	In the first we compute the roots of Althammer polynomials and checks if they are contained in the expected interval.
	In the second we solve a least squares problem that appeared in \cite{IsKoNoSS89}.
	\subsubsection{Roots of Althammer polynomials}
	The roots of Althammer polynomials \cite{Al62} are simple, real and contained in the interval $\left[-1,1\right]$.
	A quadrature rule with $60$ nodes is used to discretize the Sobolev inner product \eqref{eq:AltPoly}.
	Using the four methods we compute the roots of the SOPs of degree $n=50$ and $n=60$ by computing the eigenvalues of the computed Hessenberg recurrence matrices.
	The results are shown in Figure~\ref{fig:n50} and Figure \ref{fig:n60}, respectively.
	For $n=50$ all roots are simple, real and lie in the interval $\left[-1,1\right]$.
	However, for $n=60$ we see that the discretized Stieltjes procedure and modified Chebyshev method provide complex roots.
	This means that the roots obtained by the discretized Stieltjes procedure and modified Chebyshev method are wrong.
	These two methods generate a sequence of monic Sobolev orthogonal polynomials.
	
	We observed that the condition number of the Hessenberg recurrence matrix for monic SOPs is significantly larger, in the order of $10^{16}$, than for the matrix for orthonormal SOPs, order of $10^2$.
	The substantially different results for the computation of the roots can be explained by this difference in normalization and is not due to the numerical algorithms generating the Hessenberg matrix.
	The explanation might be the conditioning of the eigenvalue problem, that must be solved to obtain the roots, for the recurrence matrix representing the orthonormal polynomials compared to the one representing monic polynomials.
	Exploring the conditioning of the eigenvalue problem and a more in-depth comparison of the methods is out of the scope of this paper.
	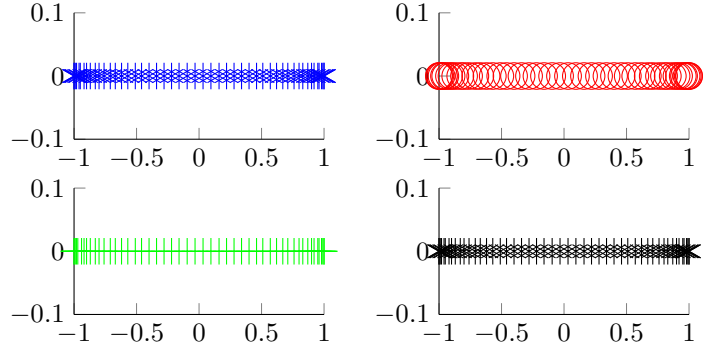
\begin{figure}[!ht]
		\centering
		\setlength\figureheight{4cm}
		\setlength\figurewidth{8cm}
		{
%
\begin{tikzpicture}

\begin{axis}[%
width=0.411\figurewidth,
height=0.419\figureheight,
at={(0\figurewidth,0.581\figureheight)},
scale only axis,
xmin=-1,
xmax=1,
ymin=-0.1,
ymax=0.1,
ytick={-0.1,0,0.1},
axis background/.style={fill=white},
axis x line*=bottom,
axis y line*=left
]
\addplot [color=blue, draw=none, mark size=5.0pt, mark=asterisk, mark options={solid, blue}, forget plot]
  table[row sep=crcr]{%
-1	0\\
-1	0\\
-0.99	0\\
-0.98	0\\
-0.96	0\\
-0.95	0\\
-0.92	0\\
-0.9	0\\
-0.87	0\\
-0.83	0\\
-0.8	0\\
-0.76	0\\
-0.71	0\\
-0.67	0\\
-0.62	0\\
-0.57	0\\
-0.51	0\\
-0.46	0\\
-0.4	0\\
-0.34	0\\
-0.28	0\\
-0.22	0\\
-0.16	0\\
-0.095	0\\
-0.032	0\\
0.032	0\\
0.095	0\\
0.16	0\\
0.22	0\\
0.28	0\\
0.34	0\\
0.4	0\\
0.46	0\\
0.51	0\\
0.57	0\\
0.62	0\\
0.67	0\\
0.71	0\\
0.76	0\\
0.8	0\\
0.83	0\\
0.87	0\\
0.9	0\\
0.92	0\\
0.95	0\\
0.96	0\\
0.98	0\\
0.99	0\\
1	0\\
1	0\\
};
\end{axis}

\begin{axis}[%
width=0.411\figurewidth,
height=0.419\figureheight,
at={(0.6\figurewidth,0.581\figureheight)},
scale only axis,
xmin=-1,
xmax=1,
ymin=-0.1,
ymax=0.1,
ytick={-0.1,0,0.1},
axis background/.style={fill=white},
axis x line*=bottom,
axis y line*=left
]
\addplot [color=red, draw=none, mark size=5.0pt, mark=o, mark options={solid, red}, forget plot]
  table[row sep=crcr]{%
1	0\\
1	0\\
0.99	0\\
0.98	0\\
0.96	0\\
0.95	0\\
0.92	0\\
0.9	0\\
0.87	0\\
0.83	0\\
0.8	0\\
0.76	0\\
0.71	0\\
0.67	0\\
0.62	0\\
0.57	0\\
0.51	0\\
0.46	0\\
0.4	0\\
0.34	0\\
0.28	0\\
0.22	0\\
0.16	0\\
0.095	0\\
0.032	0\\
-0.032	0\\
-0.095	0\\
-0.16	0\\
-0.22	0\\
-0.28	0\\
-0.34	0\\
-1	0\\
-1	0\\
-0.99	0\\
-0.98	0\\
-0.96	0\\
-0.95	0\\
-0.92	0\\
-0.9	0\\
-0.87	0\\
-0.83	0\\
-0.8	0\\
-0.76	0\\
-0.71	0\\
-0.67	0\\
-0.4	0\\
-0.62	0\\
-0.57	0\\
-0.46	0\\
-0.51	0\\
};
\end{axis}

\begin{axis}[%
width=0.411\figurewidth,
height=0.419\figureheight,
at={(0\figurewidth,0\figureheight)},
scale only axis,
xmin=-1,
xmax=1,
ymin=-0.1,
ymax=0.1,
ytick={-0.1,0,0.1},
axis background/.style={fill=white},
axis x line*=bottom,
axis y line*=left
]
\addplot [color=green, draw=none, mark size=5.0pt, mark=+, mark options={solid, green}, forget plot]
  table[row sep=crcr]{%
-1	0\\
-1	0\\
-0.99	0\\
-0.98	0\\
-0.97	0\\
-0.94	0\\
-0.92	0\\
-0.9	0\\
-0.87	0\\
-0.83	0\\
-0.8	0\\
-0.76	0\\
-0.71	0\\
-0.67	0\\
-0.62	0\\
-0.57	0\\
-0.51	0\\
-0.46	0\\
-0.4	0\\
-0.34	0\\
-0.28	0\\
-0.22	0\\
-0.16	0\\
-0.095	0\\
-0.032	0\\
0.032	0\\
0.095	0\\
0.16	0\\
0.22	0\\
0.28	0\\
0.34	0\\
0.4	0\\
0.46	0\\
0.51	0\\
0.57	0\\
0.62	0\\
0.67	0\\
0.71	0\\
0.76	0\\
0.8	0\\
0.83	0\\
0.87	0\\
0.9	0\\
0.92	0\\
0.95	0\\
0.96	0\\
0.98	0\\
0.99	0\\
1	0\\
1	0\\
};
\end{axis}

\begin{axis}[%
width=0.411\figurewidth,
height=0.419\figureheight,
at={(0.6\figurewidth,0\figureheight)},
scale only axis,
xmin=-1,
xmax=1,
ymin=-0.1,
ymax=0.1,
ytick={-0.1,0,0.1},
axis background/.style={fill=white},
axis x line*=bottom,
axis y line*=left
]
\addplot [color=black, draw=none, mark size=5.0pt, mark=asterisk, mark options={solid, black}, forget plot]
  table[row sep=crcr]{%
-1	0\\
-1	0\\
-0.99	0\\
-0.98	0\\
-0.96	0\\
-0.95	0\\
-0.92	0\\
-0.9	0\\
-0.87	0\\
-0.83	0\\
-0.8	0\\
-0.76	0\\
-0.71	0\\
-0.67	0\\
-0.62	0\\
-0.57	0\\
-0.51	0\\
-0.46	0\\
-0.4	0\\
-0.34	0\\
-0.28	0\\
-0.22	0\\
-0.16	0\\
-0.095	0\\
-0.032	0\\
0.032	0\\
0.095	0\\
0.16	0\\
0.22	0\\
0.28	0\\
0.34	0\\
1	0\\
1	0\\
0.99	0\\
0.98	0\\
0.96	0\\
0.95	0\\
0.92	0\\
0.9	0\\
0.87	0\\
0.83	0\\
0.8	0\\
0.76	0\\
0.71	0\\
0.67	0\\
0.4	0\\
0.62	0\\
0.57	0\\
0.46	0\\
0.51	0\\
};
\end{axis}
\end{tikzpicture}
		\caption{Roots of Althammer polynomials of degree $n=50$ and $\gamma=100$ computed by, from left to right and top to bottom, Stieltjes procedure ${\color{blue} \ast}$, Arnoldi iteration ${\color{red}\circ}$, modified Chebyshev method ${\color{green} +}$ and updating procedure ${\color{black}\ast}$.}
		\label{fig:n50}
	\end{figure}
	
	\begin{figure}[!ht]
		\centering
		\setlength\figureheight{4cm}
		\setlength\figurewidth{8cm}
		{
%
\begin{tikzpicture}

\begin{axis}[%
width=0.411\figurewidth,
height=0.419\figureheight,
at={(0\figurewidth,0.581\figureheight)},
scale only axis,
xmin=-1,
xmax=1,
ymin=-0.1,
ymax=0.1,
ytick={-0.1,0,0.1},
axis background/.style={fill=white},
axis x line*=bottom,
axis y line*=left
]
\addplot [color=blue, draw=none, mark size=5.0pt, mark=asterisk, mark options={solid, blue}, forget plot]
  table[row sep=crcr]{%
0.021	0\\
-0.046	-0.012\\
-0.046	0.012\\
0.1	-0.017\\
0.1	0.017\\
-0.15	-0.035\\
-0.15	0.035\\
0.21	-0.019\\
0.21	0.019\\
-0.26	-0.041\\
-0.26	0.041\\
0.3	0\\
0.33	0\\
-0.36	-0.041\\
-0.36	0.041\\
0.38	0\\
0.46	-0.0085\\
0.46	0.0085\\
-0.46	-0.034\\
-0.46	0.034\\
-0.53	0\\
0.55	0\\
0.56	0\\
-0.58	-0.029\\
-0.58	0.029\\
0.61	0\\
0.67	-0.019\\
0.67	0.019\\
-0.67	-0.043\\
-0.67	0.043\\
0.75	-0.027\\
0.75	0.027\\
-0.75	-0.046\\
-0.75	0.046\\
-0.81	-0.045\\
-0.81	0.045\\
0.81	-0.026\\
0.81	0.026\\
-0.87	-0.041\\
-0.87	0.041\\
0.88	-0.017\\
0.88	0.017\\
0.89	0\\
-0.92	-0.035\\
-0.92	0.035\\
0.93	-0.022\\
0.93	0.022\\
-0.96	-0.028\\
-0.96	0.028\\
0.97	-0.02\\
0.97	0.02\\
-0.98	-0.02\\
-0.98	0.02\\
0.99	-0.014\\
0.99	0.014\\
-1	-0.01\\
-1	0.01\\
1	-0.0049\\
1	0.0049\\
-1	0\\
};
\end{axis}

\begin{axis}[%
width=0.411\figurewidth,
height=0.419\figureheight,
at={(0.6\figurewidth,0.581\figureheight)},
scale only axis,
xmin=-1,
xmax=1,
ymin=-0.1,
ymax=0.1,
ytick={-0.1,0,0.1},
axis background/.style={fill=white},
axis x line*=bottom,
axis y line*=left
]
\addplot [color=red, draw=none, mark size=5.0pt, mark=o, mark options={solid, red}, forget plot]
  table[row sep=crcr]{%
1	0\\
1	0\\
0.99	0\\
0.99	0\\
0.98	0\\
0.96	0\\
0.95	0\\
0.93	0\\
0.91	0\\
0.88	0\\
0.86	0\\
0.83	0\\
0.8	0\\
0.77	0\\
0.73	0\\
0.69	0\\
0.65	0\\
0.61	0\\
0.57	0\\
0.53	0\\
0.48	0\\
0.43	0\\
0.39	0\\
0.34	0\\
0.29	0\\
0.24	0\\
0.18	0\\
0.13	0\\
0.079	0\\
0.026	0\\
-0.026	0\\
-0.079	0\\
-0.13	0\\
-0.18	0\\
-0.24	0\\
-0.29	0\\
-0.34	0\\
-1	0\\
-1	0\\
-0.99	0\\
-0.99	0\\
-0.98	0\\
-0.96	0\\
-0.95	0\\
-0.93	0\\
-0.91	0\\
-0.88	0\\
-0.86	0\\
-0.83	0\\
-0.8	0\\
-0.77	0\\
-0.73	0\\
-0.69	0\\
-0.65	0\\
-0.39	0\\
-0.61	0\\
-0.57	0\\
-0.43	0\\
-0.53	0\\
-0.48	0\\
};
\end{axis}

\begin{axis}[%
width=0.411\figurewidth,
height=0.419\figureheight,
at={(0\figurewidth,0\figureheight)},
scale only axis,
xmin=-1,
xmax=1,
ymin=-0.1,
ymax=0.1,
ytick={-0.1,0,0.1},
axis background/.style={fill=white},
axis x line*=bottom,
axis y line*=left
]
\addplot [color=green, draw=none, mark size=5.0pt, mark=+, mark options={solid, green}, forget plot]
  table[row sep=crcr]{%
-0.017	0\\
0.045	-0.0089\\
0.045	0.0089\\
-0.1	-0.023\\
-0.1	0.023\\
0.15	-0.035\\
0.15	0.035\\
-0.21	-0.028\\
-0.21	0.028\\
0.26	-0.041\\
0.26	0.041\\
-0.31	-0.02\\
-0.31	0.02\\
0.36	-0.041\\
0.36	0.041\\
-0.39	0\\
-0.44	0\\
0.46	-0.034\\
0.46	0.034\\
-0.47	0\\
0.53	0\\
-0.55	-0.018\\
-0.55	0.018\\
0.58	-0.029\\
0.58	0.029\\
-0.64	-0.0022\\
-0.64	0.0022\\
0.67	-0.043\\
0.67	0.043\\
-0.68	0\\
-0.74	-0.03\\
-0.74	0.03\\
0.75	-0.046\\
0.75	0.046\\
-0.81	-0.038\\
-0.81	0.038\\
0.81	-0.045\\
0.81	0.045\\
-0.87	-0.038\\
-0.87	0.038\\
0.87	-0.041\\
0.87	0.041\\
-0.92	-0.035\\
-0.92	0.035\\
0.92	-0.035\\
0.92	0.035\\
-0.96	-0.029\\
-0.96	0.029\\
0.96	-0.028\\
0.96	0.028\\
0.98	-0.02\\
0.98	0.02\\
-0.98	-0.021\\
-0.98	0.021\\
1	-0.01\\
1	0.01\\
-1	-0.011\\
-1	0.011\\
1	0\\
-1	0\\
};
\end{axis}

\begin{axis}[%
width=0.411\figurewidth,
height=0.419\figureheight,
at={(0.6\figurewidth,0\figureheight)},
scale only axis,
xmin=-1,
xmax=1,
ymin=-0.1,
ymax=0.1,
ytick={-0.1,0,0.1},
axis background/.style={fill=white},
axis x line*=bottom,
axis y line*=left
]
\addplot [color=black, draw=none, mark size=5.0pt, mark=asterisk, mark options={solid, black}, forget plot]
  table[row sep=crcr]{%
1	0\\
1	0\\
0.99	0\\
0.99	0\\
0.98	0\\
0.96	0\\
0.95	0\\
0.93	0\\
0.91	0\\
0.88	0\\
0.86	0\\
0.83	0\\
0.8	0\\
0.77	0\\
0.73	0\\
0.69	0\\
0.65	0\\
0.61	0\\
0.57	0\\
0.53	0\\
0.48	0\\
0.43	0\\
0.39	0\\
0.34	0\\
0.29	0\\
0.24	0\\
0.18	0\\
0.13	0\\
0.079	0\\
0.026	0\\
-0.026	0\\
-0.079	0\\
-0.13	0\\
-0.18	0\\
-0.24	0\\
-0.29	0\\
-0.34	0\\
-1	0\\
-1	0\\
-0.99	0\\
-0.99	0\\
-0.98	0\\
-0.96	0\\
-0.95	0\\
-0.93	0\\
-0.91	0\\
-0.88	0\\
-0.86	0\\
-0.83	0\\
-0.8	0\\
-0.77	0\\
-0.73	0\\
-0.69	0\\
-0.65	0\\
-0.39	0\\
-0.61	0\\
-0.57	0\\
-0.43	0\\
-0.53	0\\
-0.48	0\\
};
\end{axis}
\end{tikzpicture}
		\caption{Roots of Althammer polynomials of degree $n=60$ and $\gamma=100$ computed, from left to right and top to bottom, Stieltjes procedure ${\color{blue} \ast}$, Arnoldi iteration ${\color{red}\circ}$, modified Chebyshev method ${\color{green} +}$ and updating procedure ${\color{black}\ast}$.}
		\label{fig:n60}
	\end{figure}
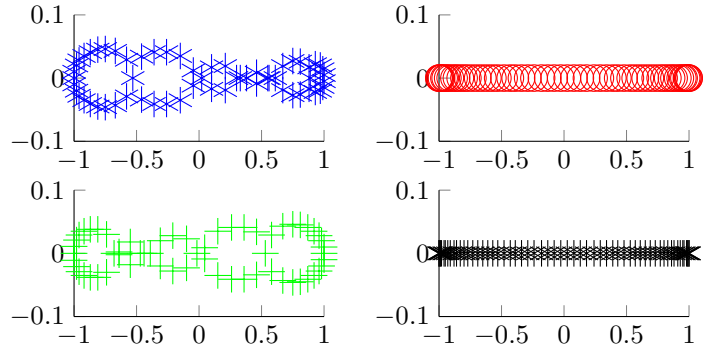
	
	\subsubsection{Least squares problem}
	In least squares function approximation we seek an approximation to an unknown function $f(x)$ which is only known in a set of $m$ given nodes.
	Let the unknown function be, as in \cite{IsKoNoSS89},
	\begin{equation*}
		f(x) = \exp(-100(x-1/5)^2).
	\end{equation*}
	Consider a Hermite least squares problem, where the information available are the function values $\{f(x_j)\}_{j=1}^m$ and the values of its first derivative $\{f'(x_j)\}_{j=1}^m$.
	That is, find the polynomial $f_n$ of degree $n$ that minimizes the least squares criterion
	\begin{equation*}
		\sum_{j=1}^{m} \vert \beta_j \vert^2 \vert f_n(x_j) - f(x_j)\vert^2 + \gamma \sum_{j=1}^{m} \vert \beta_j \vert^2 \vert f'_n(x_j) - f'(x_j)\vert^2.
	\end{equation*}
	A basis of Sobolev orthogonal polynomials is the natural choice of basis in which to formulate the linear system associated to this problem and in which to represent the polynomial approximant $f_n$ \cite{Fo57}.
	Suppose the nodes and weights come from the Gauss-Legendre quadrature rule, that is, $x_j = x_j^{\textrm{Le}}$ and $\beta_j = \beta_j^{\textrm{Le}}$ for $j=1,2,\dots,m$.
	
	For $m=201$ we construct, using the Arnoldi iteration, the least squares polynomial approximant of degree $n=1,11,\dots,201$, for $\gamma = 0$, Legendre polynomials, and $\gamma = 1/100$, Althammer polynomials.
	The error measure in the maximum norm is shown in Figure \ref{fig:LS}, the most accurate approximation to the derivative $f'(x)$ is obtained by the Althammer polynomials. 
	In terms of function values they perform equally well.
	These results correspond to those reported in \cite{IsKoNoSS89}, indicating that the procedure proposed in this paper is valid.
	
	\begin{figure}[!ht]
		\centering
		\setlength\figureheight{2.5cm}
		\setlength\figurewidth{9cm}
		{
%
\begin{tikzpicture}

\begin{axis}[%
width=0.411\figurewidth,
height=\figureheight,
at={(0\figurewidth,0\figureheight)},
scale only axis,
xmin=0,
xmax=201,
xlabel style={font=\color{white!15!black}},
xlabel={$n$},
ymode=log,
ymin=1e-16,
ymax=1,
yminorticks=true,
ylabel style={font=\color{white!15!black}},
ylabel={$\Vert f_n(x)-f(x) \Vert_\infty$},
axis background/.style={fill=white}
]
\addplot [color=blue, draw=none, mark=asterisk, mark options={solid, blue}, forget plot]
table[row sep=crcr]{%
	1	0.9110\\
	11	0.469\\
	21	0.149\\
	31	0.0279\\
	41	0.00448\\
	51	0.000324\\
	61	2.04e-05\\
	71	7.79e-07\\
	81	1.74e-08\\
	91	3.66e-10\\
	101	3.51e-12\\
	111	3.89e-14\\
	121	1.22e-15\\
	131	1.11e-15\\
	141	1.11e-15\\
	151	1.11e-15\\
	161	1.11e-15\\
	171	1.11e-15\\
	181	1.11e-15\\
	191	1.11e-15\\
	201	1.11e-15\\
};
\addplot [color=red, draw=none, mark=o, mark options={solid, red}, forget plot]
table[row sep=crcr]{%
	1	0.911\\
	11	0.432\\
	21	0.135\\
	31	0.0364\\
	41	0.00857\\
	51	0.000807\\
	61	6.84e-05\\
	71	2.59e-06\\
	81	7.6e-08\\
	91	1.45e-09\\
	101	1.86e-11\\
	111	1.88e-13\\
	121	1.33e-15\\
	131	1.22e-15\\
	141	1.22e-15\\
	151	1.22e-15\\
	161	1.22e-15\\
	171	1.22e-15\\
	181	1.22e-15\\
	191	1.22e-15\\
	201	1.11e-15\\
};
\end{axis}

\begin{axis}[%
width=0.411\figurewidth,
height=\figureheight,
at={(0.65\figurewidth,0\figureheight)},
scale only axis,
xmin=0,
xmax=201,
xlabel style={font=\color{white!15!black}},
xlabel={$n$},
ymode=log,
ymin=1e-15,
ymax=3.29,
yminorticks=true,
ylabel style={font=\color{white!15!black}},
ylabel={$\Vert f'_n(x)-f'(x) \Vert_\infty$},
axis background/.style={fill=white}
]
\addplot [color=blue, draw=none, mark=asterisk, mark options={solid, blue}, forget plot]
table[row sep=crcr]{%
	1	1\\
	11	0.886\\
	21	0.45\\
	31	0.181\\
	41	0.0458\\
	51	0.00573\\
	61	0.000643\\
	71	2.86e-05\\
	81	1.04e-06\\
	91	2.18e-08\\
	101	3.34e-10\\
	111	3.56e-12\\
	121	2.75e-14\\
	131	1.66e-15\\
	141	1.67e-15\\
	151	2.08e-15\\
	161	2.41e-15\\
	171	2.62e-15\\
	181	2.36e-15\\
	191	2.23e-15\\
	201	3.39e-15\\
};
\addplot [color=red, draw=none, mark=o, mark options={solid, red}, forget plot]
table[row sep=crcr]{%
	1	1\\
	11	1.87\\
	21	3.29\\
	31	2.13\\
	41	0.833\\
	51	0.123\\
	61	0.015\\
	71	0.000762\\
	81	2.95e-05\\
	91	7e-07\\
	101	1.12e-08\\
	111	1.35e-10\\
	121	8.87e-13\\
	131	3.21e-13\\
	141	6.22e-13\\
	151	9.09e-13\\
	161	1.33e-12\\
	171	1.14e-12\\
	181	7.72e-13\\
	191	7.74e-13\\
	201	3.09e-13\\
};
\end{axis}
\end{tikzpicture}
		\caption{Approximation quality of the least squares solution $f_n(x)$ for the Legendre inner product ${\color{red} \circ}$, $\gamma=0$, and the Althammer inner product ${\color{blue} \ast}$, with $\gamma=1/10$. The error, measured in the maximum norm, for the function itself on the left and for the first derivative on the right.}
		\label{fig:LS}
	\end{figure}
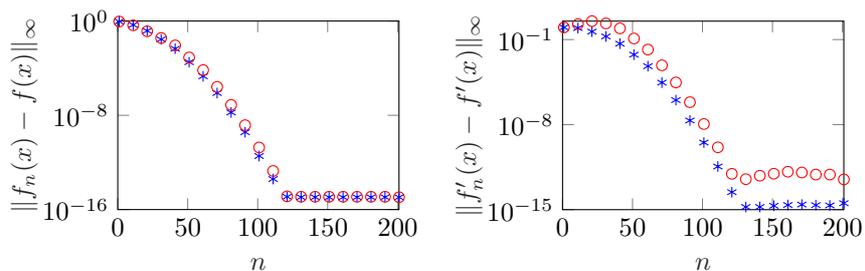
	
	\section{Conclusion and future work}
	A connection between Sobolev orthonormal polynomials and orthonormal bases for Krylov subspaces generated with a Jordan matrix is proved.
	Using this connection the problem of generating Sobolev orthogonal polynomials is reformulated as a Hessenberg inverse eigenvalue problem.
	Such a problem starts from known spectral information, the Jordan matrix containing the eigenvalues of the Hessenberg matrix and the first entries of its eigenvectors, and reconstructs the Hessenberg matrix.
	The reconstructed Hessenberg matrix represents the recurrence relation for Sobolev orthogonal polynomials.
	In the context of Sobolev orthogonal polynomials the spectral information for this inverse eigenvalue problem is obtained by discretizing the associated Sobolev inner product with a Gauss quadrature rule.
	Two new numerical procedures to solve the Hessenberg inverse eigenvalue problem are proposed.
	Numerical experiments show that they are competitive with the state-of-the-art methods, they produce results of similar accuracy.
	The updating procedure is the most efficient in terms of memory.
	A more thorough comparison between these methods, an analysis of their computational cost and potential parallelization of the updating procedure are subject of future research.
	
	This new connection between Sobolev orthonormal polynomials and Krylov subspaces opens the possibility to use efficient and robust numerical algorithms from the field of numerical linear algebra for computing (with) Sobolev orthogonal polynomials.
	The development of spectral solvers for differential equations based on these polynomials \cite{YuWaLi19} has received some attention in recent years, efficient numerical algorithms will be paramount to their success.
	
	\section*{Acknowledgements}
	The author would like to thank Francisco Marcell\'an and Stefano Pozza for comments on an earlier draft of this paper which improved its presentation greatly and Petr Tich\'y for suggesting valuable references. 

	\section*{Funding}
	The research of the author was supported by Charles University Research program No. PRIMUS/21/SCI/009.
	
	\bibliographystyle{siam}
	\bibliography{/home/buggenhout/Research/references}  

	\end{document}